 \newtheorem{theorem}{Theorem}[section]
 \newtheorem{proposition}[theorem]{Proposition}
 \newtheorem{corollary}[theorem]{Corollary}
 \newtheorem{example}[theorem]{Example}
 \newtheorem{remark}{Remark}
 \newcommand{\hm}{\mbox{$\overline{m}$}}
\newcommand{\hM}{\mbox{$\overline{M}$}}
\newcommand{\tA}{\mbox{$\widetilde{A}$}}
\newcommand{\tv}{\mbox{$\tilde{v}$}}
\newcommand{\DD}{\mbox{${\mathcal D} $}}
\newcommand{\EE}{\mbox{${\mathcal E} $}}
\newcommand{\FF}{\mbox{${\mathcal F} $}}
\newcommand{\GG}{\mbox{${\mathcal G} $}}
\newcommand{\II}{\mbox{${\mathcal I} $}}
\newcommand{\NN}{\mbox{${\mathcal N} $}}
\newcommand{\OO}{\mbox{${\mathcal O} $}}
\newcommand{\PP}{\mbox{${\mathcal P} $}}
\newcommand{\W}{\mbox{${\mathbf W} $}}
\newcommand{\IR}{\mbox{${\mathbb R} $}}
\newcommand{\IC}{\mbox{${\mathbb C} $}}
\newcommand{\IZ}{\mbox{${\mathbb Z} $}}
\newcommand{\IE}{\mbox{${\mathbb E} $}}
\newcommand{\I}{\mbox{${\mathbb I} $}}
\newcommand{\bone}{\mathbf{1}}
\newcommand{\bzero}{\mathbf{0}}
\newcommand{\bT}{\mbox{${\mathbf T} $}}
\newcommand{\bZ}{\mbox{${\mathbf Z} $}}
\newcommand{\ba}{\mbox{$\mathbf{a}$}} 
\newcommand{\bb}{\mbox{$\mathbf{b}$}} 
\newcommand{\bm}{\mbox{$\mathbf{m}$}} 
\newcommand{\bq}{\mbox{$\mathbf{q}$}}
\newcommand{\bz}{\mbox{$\mathbf{z}$}}
\newcommand{\diin}{d_i^{\mathrm{in}}}
\newcommand{\diout}{d_i^{\mathrm{out}}}
\newcommand{\Var}{\mathop{\mathrm{Var}}}
\newcommand{\Diag}{\mathop{\mathrm{Diag}}}
\newcommand{\din}{d^{\mathrm{in}}}
\newcommand{\dout}{d^{\mathrm{out}}}
\newcommand{\cas}{\xrightarrow{a.s.}}
\newcommand{\cad}{\xrightarrow{d}}
\title{Interacting Urns on Directed Networks \\ with  Node-Dependent Sampling and Reinforcement} 
\author{Gursharn Kaur\footnote{Biocomplexity Institute, University of Virginia, Charlottesville, USA. Email: gursharn@virginia.edu} \ and Neeraja Sahasrabudhe\footnote{Department of Mathematical Sciences, IISER Mohali, Sector 81, SAS Nagar, India. Email: neeraja@iisermohali.com}} 
\date{}
\begin{document}

\maketitle
\begin{abstract}
We consider interacting urns on a finite directed network, where both sampling and reinforcement processes depend on the nodes of the network. This extends previous research by incorporating node-dependent sampling and reinforcement. We classify the sampling and reinforcement schemes, as well as the networks on which the proportion of balls of either colour in each urn converges almost surely to a deterministic limit. We also investigate conditions for achieving synchronisation of the colour proportions across the urns and analyse fluctuations under specific conditions on the reinforcement scheme, and network structure. 
\end{abstract}

\section{Introduction}\label{sec:intro}

Interacting urn models have been studied extensively in recent times \cite{aletti2017interacting, crimaldi2023interacting, mirebrahimi2023synchronization, crimaldi2016fluctuation, Friedman1,aletti2024networks,qin2023interacting}. In an interacting urn model, each urn is reinforced based on the sampling of balls from itself or other urns in the system. Such models exhibit interesting asymptotic behaviour and have applications across various fields, such as opinion dynamics \cite{kaur2019negatively} and in analysing contagion over a network \cite{singh2022finite}. In addition to the convergence, the phenomenon of synchronisation (or consensus) is also of interest, especially for exploring applications of these models in opinion dynamics. Synchronisation refers to the convergence of the proportion of balls of each colour to the same limit across all urns. A special class of interacting models was studied in \cite{Friedman1}, where the authors study a two-colour multi-urn process where the evolution of each urn depends on itself (with probability $p$) as well as on all the other urns in the system (with probability $1-p$). The interaction aspect of such models has been extended to study urn processes (or more generally stochastic processes taking values in $[0, 1]$) on finite networks in \cite{aletti2017synchronization}.
 The model studied in \cite{qin2023interacting} extends the interactions described in \cite{Friedman1} by incorporating a non-linear sampling probability that depends on a function of the number of balls of each color. The author obtains conditions on the function so that with probability $1$ eventually only balls of one colour are added to the urns. In \cite{crimaldi2023interacting} authors consider interacting urns where the reinforcement dynamics depend on the average composition in the system as well as a nonlinear function of the individual urn composition and show that in some cases there can be no synchronisation even when there is an interaction between nodes. Further, the authors in \cite{mirebrahimi2023synchronization} propose a system of reinforced stochastic processes, interacting through an additional collective reinforcement of mean-field type. 

In this paper, we extend the work of \cite{aletti2017synchronization} and \cite{kaur2023interacting} by considering urns with balls of two colours on a finite directed network $\GG = (V, \EE)$, such that each urn $i$ uses a node-dependent reinforcement matrix $R_i$. That is, at each time step, a ball is drawn from each urn $i$, and the urn reinforces its out-neighbours based on the colour of the drawn ball. If a white ball is drawn, it adds $[R_i]_{1,1}$ white balls and $[R_i]_{1,2}$ black balls to each of its out-neighbours; if a black ball is drawn, it adds $[R_i]_{2,1}$ white balls $[R_i]_{2,2}$ black balls to its out-neighbours. We assume that each reinforcement matrix is balanced, that is the row sums of $R_i$ are constant (say $m_i$). 

We classify the urns or nodes as either {\it P\'olya} or {\it non-P\'olya} type based on the nature of their reinforcement matrices. By considering \emph{node-dependent} reinforcement, this paper extends the work of \cite{kaur2023interacting}, where the asymptotic properties of a similar interacting urn model with a fixed reinforcement scheme are studied. 

In addition to node-based reinforcement, we also consider { node-based sampling, wherein at each time step the probability of drawing a white ball from urn $i$ is the fraction of white balls in the urn at that time, with probability $q_i$ and the fraction of black balls, with probability $1-q_i$. In other words, each urn has a tendency (quantified by $q_i$) to \lq\lq lie" about its actual configuration. When $q_i$ is either 0 or 1, it results in either preferential (where a white ball is drawn with probability proportional to its fraction) or de-preferential sampling (where a white ball is drawn with probability proportional to the fraction of black balls) respectively. \color{black} This type of linear de-preferential sampling, where a more frequent colour is less likely to be sampled, has been studied before in \cite{bandyopadhyay2018linear, kaur2019negatively} for a single urn with multiple colours, where the authors showed that depending on the reinforcement matrix, colour proportions in the urn converge almost surely to a deterministic vector and derived central limit theorem type results.

In this paper, we classify the reinforcement types and graph structures that ensure the proportion of balls of each colour across all urns converges almost surely to a deterministic limit, thus generalizing the results in \cite{kaur2023interacting}. Our results show that a deterministic limit exists if there is at least one node with $0<q_i<1$, or the graph and the reinforcement matrices are such that the influence of the stubborn urn (nodes with 0 in-degree) or a non-P\'olya type urn permeates the entire graph. Specifically, on a strongly connected graph, the presence of a single node with non-P\'olya type reinforcement is sufficient to guarantee a deterministic limit for the proportion of balls of either colour across all urns. Specifically when all nodes are of P\'olya type, we show that the presence of de-preferential nodes can still yield a deterministic limit. Further, when $q_i \in \{0,1\}$ for all $i$,  we classify graphs based on the relative positioning of preferential and de-preferential nodes, where a deterministic limit is feasible. We also derive general conditions for synchronisation, where the proportion of balls of either colour converges to the same deterministic limit in each urn. Finally, we state and prove CLT-type results for fluctuation of the proportion of a colour in each urn around its limit. 

In the next section, we provide an overview of the interacting urn process. For a matrix $Q \in \IR^{d \times d}$ and subsets $S, F \subseteq [d] \coloneqq \{1, 2, \dots, d \}$, we use the notation $Q_{SF}$ to represent the $|S| \times |F|$ submatrix obtained by selecting elements from the index set $S \times F$. For simplicity, we write $Q_S$ instead of $Q_{SS}$. Throughout the paper, $\bone$ denotes a row vector of appropriate dimension with all elements equal to $1$.


\section{Interacting Urn Process} \label{sec:prelim}
Let $\GG=(V, \EE)$ be a directed network, where $V= [N]$ denotes the set of nodes and $\EE$ represents the set of directed edges. For nodes $i$ and $j$ in $V$, we use $i\to j$ to indicate a directed edge from $i$ to $j$, and $i \rightsquigarrow j$ denotes a path $i=i_0\to i_1\to \dots \to i_{k-1}\to i_k=j$ from $i$ to $j$, where $i_1,\dots, i_{k-1}\in V$. For a subset $U \subseteq V$, $v \to U$ means there exists at least one node $u \in U$ such that $v \to u$. The in-degree and out-degree of a node $i$ are denoted by $\din_i \coloneqq |\{j\in V: j \to i \}|$ and $\dout_i \coloneqq |\{j\in V: i \to j \}|$ respectively. The in-neighbourhood of node $i$ is $N_i \coloneqq \{j \in V: j\to i \}$. Throughout this paper, we assume that $\GG$ is weakly connected.

Following the approach in \cite{kaur2023interacting}, the node set $V$ is partitioned into two disjoint sets: the set of stubborn nodes denoted by $S$ and the set of flexible nodes denoted by $F$. Specifically, we have $V = S \cup F$, where $S = \{i \in V : \din_i = 0\}$ represents the stubborn nodes and $F = \{i \in V : \din_i > 0\}$ represents the flexible nodes. Without loss of generality, we assume that the nodes labeled $1, \dots, |F|$ belong to the flexible set $F$.By adopting this labeling convention, the adjacency matrix $A$, where $[A]_{i, j}=\I_{\{i \to j \}}$, has the following block structure: 
\[\begin{pmatrix}
A_F & \bzero\\
A_{SF} &\bzero
\end{pmatrix}.\]

Suppose each node $i \in V$ has an urn that contains balls of two colours, white and black. Let $(W_i^t, B_i^t)$ be the configuration of the urn at node $i$, where $W_i^t$ and $B_i^t$ denote the number of white and black balls. Let $T_i^t = W_i^t + B_i^t$ be the total number of balls in urn $i$ at time $t$. Define $\bZ^t = (Z_1^t, \dots, Z_N^t)$, where $Z_i^t = \frac {W_i^t}{W_i^t+B_i^t}$, is the fraction of white balls in urn $i$ at time $t\geq 0$. Given the configuration ${(W_i^t, B_i^t)}_{i\in V}$ at time $t$, the configuration of each urn is updated at time $t+1$ using the following two steps:

\begin{enumerate}
\item \textbf{Sampling:} A ball is selected from each urn with a probability that is a convex combination of the proportion of white balls and the proportion of black balls. Let $\chi_i^t$ be the indicator variable for the event that a white ball is drawn from the urn at node $i$ at time $t$. Then conditioned on $\FF_t = \sigma\left(\bZ^0, \bZ^1, \dots, \bZ^t \right)$, $\{\chi_i^{t+1}\}_{i \in V}$ are independent random variables such that
\begin{align} \label{eq:chi_q}
 \chi_i^{t+1}
 & = \begin{cases} 
	 1 & \text{with probability } \ q_i Z^t_i + (1-q_i)(1-Z_i^t) \\
	 0 & \text{with probability } \ (1-q_i)Z_i^t + q_i^t (1- Z^t_i),
 \end{cases}
\end{align} 
where $q_i \in [0, 1]$ for each node $i$, i.e. given $\FF_t$, $\chi^{t+1}_i$ is a Ber\big($(2q_i-1) Z_i^t + (1-q_i) \big)$ random variable.  We call this process \emph{linear sampling} with parameter $q_i$. Note that, when $q_i = 1/2$, the sampling is independent of the urn configuration. A node $i$ is termed \emph{preferential} if $q_i=1$ and \emph{de-preferential} if $q_i=0$. \color{black} Let $\PP, \DD$ denote the set of nodes with preferential and de-preferential sampling respectively.

Let $\chi^{t+1} = \left(\chi^{t+1}_1, \dots, \chi^{t+1}_N \right)$. Define $\II \coloneqq \Diag(2q_1-1, \dots, 2q_N-1)$ 
and $\Theta^t \coloneqq \Diag\big( (Z^t_1 -1/2)^2, \dots, (Z^t_N-1/2)^2 \big)$. Then, we have
\begin{equation}\label{eq:mean:chi}
 \IE[\chi^{t+1}|\FF_t] = \bZ^t \II + (\bone - \bq)
 \end{equation}
and
\begin{equation}\label{eq:var:chi}
\Var(\chi^{t+1}|\FF_t)= - \Theta ^t \II^2 + \frac{1}{4} I.
\end{equation}

After observing the vector $\chi^{t+1}$, the balls are returned to their respective urns along with a specified number of white and balls, according to the reinforcement scheme described in the next step.
\item \textbf{Reinforcement:} Let $m_i \in \IZ_{\geq 0}$ and $\alpha_i, \beta_i \in \{0, 1, \dots, m_i\}$ be fixed non-negative integers for each node $i\in V$. If a white ball is selected from the urn at node $i$ (in the sampling step), $\alpha_i$ white balls and $m_i - \alpha_i$ black balls are added to each urn $j$ such that $i\to j$. On the other hand, if a black ball is selected from the urn at node $i$, $m_i - \beta_i$ white balls and $\beta_i$ black balls are added to each urn at nodes $j$ such that $i\to j$. In other words, the urn at node $i$ reinforces its out-neighbours according to the reinforcement matrix
 $R_i = \begin{pmatrix}\alpha_i & m_i-\alpha_i \\ 
 m_i-\beta_i & \beta_i \end{pmatrix}$.
We classify the type of reinforcement by node $i$ as follows.
\begin{enumerate}
 \item[(i)] P\'olya type: if $\alpha_i=\beta_i=m_i$, which corresponds to $R_i = m_i I$.
 \item[(ii)] non-P\'olya type: if $0 < \alpha_i+\beta_i<2m_i$. 
\end{enumerate}
\end{enumerate}

\noindent 
The interacting urn dynamics (defined by the sampling and reinforcement steps) can be expressed by the following recursive relations: 
\begin{align}\label{rec}
W_i^{t+1} & = W_i^t + \sum_{j \in N_i} \left[\alpha_j \chi_j^{t+1}+ (m_j-\beta_j) (1-\chi_j^{t+1}) \right], \nonumber \\
B_i^{t+1} & = B_i^t + \sum_{j \in N_i} \left[m_j - \alpha_j \chi_j^{t+1}+ \beta_j (1-\chi_j^{t+1}) \right], \quad \quad \forall \, i\in V.
\end{align}
Note that, although we consider $m_i, \alpha_i, \beta_i \in \IZ_{\geq0}$, the results in this paper extend to all balanced matrices with entries in $\IR_{\geq 0}$. Furthermore, the urns at stubborn nodes are not reinforced, and therefore their configurations remain unchanged throughout the process. 

Before we proceed to state and prove our main results, we fix some notation. Define $a_i = \alpha_i/m_i$ and $b_i = \beta_i/m_i$. Let $\ba = (a_1, \dots, a_N)$ and $\bb = (b_1, \ldots, b_N)$. The total reinforcement at node $i$ is $\hm_i = \sum_{j\in N_i} m_j$. We also define the diagonal matrices $B = \Diag(a_1+b_1 - 1, \dots, a_N+b_N - 1)$, $\bT^t = \Diag(T_1^t, \dots, T_N^t)$, $M = \Diag(m_1, \dots, m_N)$, and $\hM = \Diag(\hm_1, \dots, \hm_{|F|}, \bzero_S)$, where $\hm_i = 0$ for every $i \in S$. Finally, the scaled adjacency matrix is defined as $\tA = M A \hM^{-1}$, where $\hM^{-1}=\Diag(\hm_1^{-1}, \dots, \hm_{|F|}^{-1},\bzero_S)$.

\subsection{Equivalence in node-based and node-independent sampling}\label{sec:quaivalence-with-uniform}
Throughout this paper, we have omitted the case where $\alpha_i+\beta_i=0$ or $\alpha_i+\beta_i = 2m_i$, except for a specific case covered under Theorem~\ref{thm:convergence}. The case where $\alpha_i+\beta_i=0$ is when both values are zero, which leads to the reinforcement matrix $\begin{pmatrix}
0 & m_i \\ m_i & 0
\end{pmatrix}$. It is worth noting that preferential sampling with this reinforcement matrix is equivalent to de-preferential sampling with P\'olya type reinforcement. However, as discussed later, this reinforcement scheme may not always lead to a deterministic limit. In this paper our focus is to analyse the cases where $\bZ^t$ converges to a deterministic limit, so we do not address these specific cases. 

 More generally, for any node, linear sampling with parameter $q_i$ and reinforcement with $R_i$ is equivalent to uniform sampling with reinforcement using the matrix: 
\begin{equation}\label{eq:uniform-equivalent-reinformce}
 \begin{pmatrix} \alpha_i q_i + (m_i-\beta_i)(1-q_i) & (m_i-\alpha_i)q_i + \beta_i(1-q_i) \\ \alpha_i (1-q_i) + (m_i-\beta_i)q_i & (m_i-\alpha_i)(1-q_i) + \beta_i q_i \end{pmatrix}.
\end{equation}
Such node-dependent reinforcement models, where each node uses its own reinforcement scheme, have not been studied before. Despite equivalence through this coupling, we study the processes by separating node-based sampling and node-based reinforcement for clarity and application purposes. This distinction is important for extending existing models of de-preferential sampling (see \cite{bandyopadhyay2018linear}) to interacting urns and for future exploration of nonlinear sampling schemes. The non-linear sampling has been studied before in \cite{crimaldi2023interacting}}, but it is limited to complete graphs with sampling dependent on all the urns and a non-linear function of the proportion of balls of white colour in each urn. Our approach naturally extends this to linear node-based sampling on more general graphs, and we aim to explore non-linear node-based sampling in future work.


\subsection{Exploration Process on the Graph} \label{sec:algo} 
Suppose $q_i \in \{0, 1\}$ for all $i \in [N]$ and $V = \PP \cup \DD$. We introduce an exploration process on the graph $\GG =(V, \EE)$, that starts from an arbitrary node $v\in V$ and proceeds to explore its neighbours. In this process, nodes are categorized into subsets based on their sampling type and the types of nodes in their in-neighborhood. More specifically, $\PP$ is partitioned into sets $P_1$ and $P_2$ and $\DD$ is partitioned into $D_1$ and $D_2$, with $P_1$,$P_2$,$D_1$, and $D_2$ initially empty. Depending on $v$'s sampling-type, it is assigned to $P_1$ (if preferential) or $D_1$ (if de-preferential). In the subsequent steps, the sets $P_1, P_2, D_1, D_2$ are updated based on the sampling type of newly explored node and their in-neighbours. If every node has a unique assignment, this results in a partition of $V$ into these four disjoint subsets. The exploration process is illustrated in Figure~\ref{fig:reduced_graph}. Detailed steps of the algorithm and examples are provided in the appendix (see Algorithm~\ref{alg:graph_partition} in Appendix~\ref{sec:exploration-algorithm}). This approach thus classifies all finite directed graphs into two categories -- graphs that admit partition via this exploration process and graphs that do not admit a partition. 
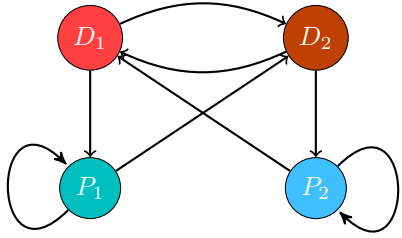
\begin{figure}
\centering
		\begin{tikzpicture}[main/.style = {draw, circle}]
				\node[main, text=white, fill= red!75 ] (D1) at (2,2) {$D_1$}; 
				\node[main, text=white, fill= red!75!green] (D2) at (5,2) {$D_2$} ;
				\node[main, text=white, fill= -red!75 ] (P1) at (2,0) {$P_1$}; 
				\node[main, text=white, fill = -red!75!green] (P2) at (5,0) {$P_2$}; 
				\Loop[dist=1.5cm](P1);
				\Loop[dist=1.5cm, dir = EA](P2);
				\draw[->, style = thick] (D2) to [out =-155, in =-25] (D1);
				\draw[->, style = thick] (D1) to [out =25, in =155] (D2);
				\draw[->, style = thick] (D1) --(P1);
				\draw[->, style = thick] (P2) --(D1);
				\draw[->, style = thick] (P1) --(D2);
				\draw[->, style = thick] (D2) --(P2);
		\end{tikzpicture}
\caption{The figure describes the exploration process for the graph partition $\GG(P_1, P_2,D_1, D_2)$, as described in Step 8 to Step 11 of Algorithm~\ref{alg:graph_partition} (Appendix~\ref{sec:exploration-algorithm}). The arrows represent the directed edges, where for instance an arrow from $D_1$ to $P_1$ means that there exists $u \in D_1$ and $v \in P_1$ such that $u \to v$ in $\GG$.} \label{fig:reduced_graph}
\end{figure}

In Section~\ref{sec:conv}, we state and prove the convergence and synchronisation results for $\bZ^t$. In particular, we show that when all $q_i \in \{0,1\}$ the limiting behaviour of the interacting urn process depends on whether the underlying graph admits a partition or not. In Section~\ref{sec:fluc}, we prove CLT type limit theorems for $\bZ^t$. Finally, in Section~\ref{sec:sim}, we discuss some examples with simulations and applications in opinion dynamics.

\section{Convergence and Synchronisation} \label{sec:conv}

\begin{theorem}[Convergence of $\bZ^t$] \label{thm:convergence}
 Suppose $F$ is strongly connected and one of the following conditions holds: 
\begin{enumerate}[(i)]
\item There exists a node $i$ with $q_i \in (0, 1)$. 
\item There exists a non-P\'olya type node in $F$.
\item There exists at least one stubborn node, i.e. $S \neq \emptyset$. 
\item All nodes in $F$ are P\'olya type and $F$ does not admit a valid graph partition as per Algorithm~\ref{alg:graph_partition} (Appendix~\ref{sec:exploration-algorithm}). 
\end{enumerate}
Then $\bZ^t \cas \bZ^\star \ \text{as } t \to \infty$, where $\bZ^\star$ is of the form $(\bZ^\star_F, \bZ^0_S)$ such that 
\begin{equation} \label{eq:limitZt}
\bZ_F^\star = \big[\bZ_S^0 (\II B \tA)_{SF} + (\ba \tA)_F - (\bq B\tA)_F\big] \left(I - (\II B\tA)_F \right)^{-1}.
\end{equation} 
\end{theorem}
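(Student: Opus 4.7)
The plan is to recast the dynamics of $\bZ^t$ on the flexible nodes as a linear stochastic approximation scheme with step sizes of order $1/t$ and invoke a standard SA convergence theorem once the requisite spectral bound is established. Since each stubborn node $i \in S$ has $\hm_i = 0$, we have $Z_i^t = Z_i^0$ for all $t$, so it suffices to analyse $\bZ^t_F$. Using (\ref{rec}) and (\ref{chi}), a computation that separates the preferential ($\IE[\chi_i \mid \FF_t] = Z_i^t$) and de-preferential ($\IE[\chi_i \mid \FF_t] = 1-Z_i^t$) contributions from each in-neighbour, together with $T_j^{t+1} = T_j^0 + (t+1)\hm_j$, gives for every $j \in F$
\begin{equation*}
\IE[Z_j^{t+1} - Z_j^t \mid \FF_t] = \frac{\hm_j}{T_j^t + \hm_j}\Big[((\bone - \bb)\tA)_j + ({\bf e} B \tA)_j + (\bZ^t \II B \tA)_j - Z_j^t\Big].
\end{equation*}
Setting $c := \bZ^0_S (\II B \tA)_{SF} + ({\bf e} B \tA)_F + ((\bone - \bb)\tA)_F$ and $h(z) := c + z(\II B \tA)_F - z$, the vector $\bZ^t_F$ satisfies a Robbins--Monro recursion $\bZ^{t+1}_F = \bZ^t_F + \gamma_t h(\bZ^t_F) + \gamma_t \Delta M_{t+1}$ with step sizes $\gamma_t$ of order $1/t$ and a bounded martingale-difference noise (each $\chi^{t+1}_j$ is Bernoulli). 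The unique zero of $h$ is $\bZ^\star_F = c\left[I - (\II B \tA)_F\right]^{-1}$, matching (\ref{eq:limitZt}) exactly.

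What remains is to show $\rho\bigl((\II B\tA)_F\bigr) < 1$ under each hypothesis; this simultaneously secures invertibility of $I - (\II B\tA)_F$ and global asymptotic stability of $\bZ^\star_F$ for the linear ODE $\dot z = h(z)$, whence the standard Kushner--Clark / Borkar SA theorem yields $\bZ^t \cas \bZ^\star$. The building blocks are that $\tA$ is entrywise non-negative, its columns indexed by $F$ sum to at most $1$ (with equality exactly when the column's node has no stubborn in-neighbour), $\II B$ is diagonal with entries in $[-1, 1]$, and $|B_{ii}| = 1$ precisely for P\'olya nodes. Strong connectivity of $F$ makes $\tA_F$ irreducible. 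For case (ii), some column of $\tA_F$ has sum strictly less than $1$ and irreducibility propagates the deflation to give $\rho(\tA_F) < 1$; the standard chain $\rho((\II B\tA)_F) \leq \rho(|\II B\tA|_F) \leq \rho(\tA_F)$ then closes the argument. For case (i) we may assume $S = \emptyset$ (otherwise (ii) applies), so $\tA_F = \tA$ is column-stochastic; the non-P\'olya node $f \in F$ has $|B_{ff}| < 1$, making column $j$ of $|\II B|\tA$ strictly substochastic whenever $f \to j$, and the usual Perron--Frobenius argument on the irreducible non-negative matrix $|\II B|\tA$ propagates this strict deflation throughout the graph, yielding $\rho(|\II B|\tA) < 1$.

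Case (iii) is the main obstacle. Here $B|_F = I$, so $(\II B\tA)_F = \II\tA_F$ is a signed irreducible matrix with $|\II\tA_F| = \tA_F$ column-stochastic, and direct estimates give only $\rho(\II\tA_F) \leq 1$. The strategy is to show that equality forces the existence of a graph partition of the type output by Algorithm~\ref{alg:graph_partition}, contradicting the hypothesis. If $x$ is a left eigenvector with $x\II\tA_F = \lambda x$ and $|\lambda|=1$, then $y_j := |x_j|$ satisfies $y \tA_F \geq y$ entrywise; since $\tA_F$ is irreducible and column-stochastic, summing both sides forces $y\tA_F = y$, so $y$ is the (positive) Perron vector and $|x_j|$ is a positive constant on $F$. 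The resulting equality in the triangle inequality forces, for every $j$, all in-neighbours $i \in N_j$ to contribute $x_i \II_{ii} \tA_{ij}$ with the same complex argument, which translates into a system of additive mod-$2\pi$ relations between the phases $\arg(x_i)$ along every directed edge, with a $\pi$-shift on edges out of $\DD$. Traversing the strongly connected graph, these relations collapse to $\lambda \in \{\pm 1\}$ and $x_i \in \{+1, -1\}$ (after a unimodular rotation), and the resulting sign classes inside $\PP$ and inside $\DD$ are precisely the partition $(P_1, P_2, D_1, D_2)$ constructed by Steps~8--11 of Algorithm~\ref{alg:graph_partition}; conversely, any valid partition supplies such an eigenvector. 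Hence (iii) rules out $|\lambda|=1$ and forces $\rho(\II\tA_F) < 1$, completing the spectral verification and hence the proof.
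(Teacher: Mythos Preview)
Your stochastic-approximation setup and the treatment of cases (i) and (ii) are essentially the same as the paper's. The crucial divergence is in case (iii), and there your argument has a genuine gap.

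You aim to prove $\rho\bigl((\II\tA)_F\bigr)<1$ whenever $F$ does not admit a partition, via the implication ``$|\lambda|=1 \Rightarrow \lambda\in\{\pm 1\}$ and the sign pattern of the eigenvector yields a valid $(P_1,P_2,D_1,D_2)$''. The first implication is false. Take the directed $4$-cycle $1\to 3\to 2\to 4\to 1$ with all nodes P\'olya, $m_i\equiv m$, $\PP=\{1,2,3\}$, $\DD=\{4\}$; this graph does \emph{not} admit a partition under Algorithm~\ref{alg:graph_partition} (see Example~\ref{single_depref}), yet $\W_F=\II\tA_F$ has characteristic polynomial $\lambda^4+1$, so every eigenvalue sits on the unit circle with $\lambda=e^{i(2k+1)\pi/4}\notin\{\pm1\}$. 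Your phase relations $\theta_i-\theta_j\equiv\arg\lambda\ (\bmod\ \pi)$ along edges do \emph{not} force $\arg\lambda\in\{0,\pi\}$; traversing a cycle of length $n$ only yields $n\arg\lambda\equiv(\text{integer})\pi$, which is far weaker. Consequently the conclusion $\rho<1$ is simply not true in case (iii), and the strategy cannot be repaired by sharpening the phase argument.

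What actually works, and what the paper does, is to show merely that $1$ is not an eigenvalue of $\W_F$. Since $|\W_F|\le\tA_F$ entrywise and $\tA_F$ is column-stochastic, one always has $\rho(\W_F)\le 1$; combined with $1\notin\sigma(\W_F)$ this gives $\Re(\lambda)<1$ for every eigenvalue (the only point of the closed unit disc with real part $1$ is $1$ itself), which is exactly the stability condition for the linear ODE $\dot z=h(z)$ and gives invertibility of $I-\W_F$. To rule out the eigenvalue $1$, take a \emph{real} vector $v$ with $\W_F v=v$ (the matrix is real), normalise so that $\max_i|v_i|=1$, and propagate the equality case along edges: each step forces $v_i\in\{\pm1\}$ with signs flipping precisely across in-edges from $\DD$, which manufactures a valid partition. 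So the contrapositive in (iii) should read ``$1\in\sigma(\W_F)\Rightarrow F$ admits a partition'', not ``$\rho(\W_F)=1\Rightarrow F$ admits a partition''.

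Two smaller points. In your case (iii) Perron step, ``summing both sides'' of $y\tA_F\ge y$ uses row sums, not column sums, so column-stochasticity does not directly close it; instead dot with the positive right Perron eigenvector $\pi$ of $\tA_F$ to force equality. In case (i), $|\II B|\tA_F$ need not be irreducible when some $|B_{ii}|=0$; it is cleaner to invoke strict monotonicity of the Perron root: $0\le |\II B|\tA_F\le\tA_F$ with a strict inequality in at least one entry and $\tA_F$ irreducible gives $\rho(|\II B|\tA_F)<\rho(\tA_F)=1$.
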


\begin{remark}
When $q_i = 1/2$ for all $i$, $\bZ_F^\star = \left(\frac{1}{2}(1+a_1-b_1, \dots, 1+a_N-b_N) \tA\right)_F$. Further, when the reinforcement at all vertices is P\'olya type ($a_i=b_i=1$), we get $\bZ_F^\star = \frac{1}{2}\big(\bone \tA\big)_F$.  
For instance, on a cycle graph, this special case is equivalent to $N$ independent urns or $N$ independent symmetric random walks.
\end{remark}

\begin{remark} \label{rem: nonP_othercases}
 We briefly discuss the case of the interacting node-based P\'olya type urn process when the underlying graph does not satisfy condition (iii) of Theorem~\ref{thm:convergence}.
\color{black}
 \begin{itemize}
\item [(a)] Suppose $q_i =0$ for all $i$ (i.e. $\PP= \emptyset$), then if the graph partition exists, $F$ admits a partition under the exploration process if and only if $F$ is a bipartite digraph with node sets $D_1$ and $D_2$ (see Figure~\ref{fig:reduced_graph}). This case is equivalent to each node $i$ sampling uniformly and reinforcement scheme $\begin{pmatrix} 0& m_i \\ m_i & 0 \end{pmatrix}$ 
(as discussed in Section~\ref{sec:quaivalence-with-uniform}). A special case of this with $m_i=m$ for undirected bipartite graph, specifically for urns with multiple drawings, has been studied in \cite{dahiya2024urns}.
\item [(b)] Suppose $q_i = 1$ for all $i$ (i.e. $\DD = \emptyset$). In this case, if the graph partition exists, there are two disjoint strongly connected components $P_1$ and $P_2$, with no interaction between $P_1$ and $P_2$. Since we assume that the graph is strongly connected, one of these components must be empty. A special case of this with $m_i =m$ for all $i$ was studied in \cite{kaur2023interacting}, where it was shown that on a regular directed graph, the limiting configuration of urns is random. Moreover, it was shown that the urns synchronise, in the sense that the fraction of balls of either colour converges to the same random limit almost surely. 
\end{itemize}
In general, when the graph is regular and $m_i=m$ for all $i$, it is easy to see that the limiting fraction takes the form such that $Z^t_i \to Z^\infty$ for all $i \in P_1 \cup D_2$ and $Z^t_i \to 1-Z^\infty$ for all $i \in P_2\cup D_1$. This can be shown by swapping the colours of the balls in $P_2\cup D_1$ and applying the existing synchronization results from \cite{aletti2024networks} for interacting P\'olya urns. 
\end{remark}

\noindent
To extend Theorem~\ref{thm:convergence} for weakly connected graphs as follows,  we define a strongly connected component $C$ of $F$  as a \emph{stubborn block} if no node outside $C$ can reach $C$; that is, for any $v \notin C$, $v \not \to C$. Otherwise, it is defined as a \emph{flexible block}.

\begin{corollary}\label{cor:gen_conv}
Suppose $F$ is weakly connected.  Suppose condition (i), (ii), or (iv) of Theorem~\ref{thm:convergence} hold for every stubborn block of $F$ or condition (iii) holds such that for every stubborn block $F'$, there exists a node $s \in S$ such that $s \to F'$. Then as $t \to \infty$, $\bZ^t \cas \bZ^\star$, where $\bZ^\star$ is as given in equation~\eqref{eq:limitZt}.
\end{corollary}


\subsection{Conditions for synchronisation}
We now explore the conditions for synchronisation, that is when the limiting fraction of balls of each colour is the same for every urn. Synchronisation occurs if and only if $\bZ_F^\star = z^\star\bone$ for some constant $z^\star$, therefore from \eqref{eq:limitZt} we get 
\begin{equation}\label{eq:expr-synch}
 z^\star \left(\bone - (\II B\tA)_F \bone \right) = \big[\bZ_S^0 (\II B \tA)_{SF} + (\ba \tA)_F - (\bq B\tA)_F\big].
\end{equation}
This equality holds if each element of the vectors on both sides matches, i.e. for every $i\in F$
\[z^\star \bigg(1 - \frac{1}{\hm_i}\sum_{j \in F\cap N_i}(2q_j-1)r_j \bigg)= \frac{1}{\hm_i} \bigg(\sum_{j \in S\cap N_i} Z_j^0 (2q_j-1) r_j + \sum_{j \in N_i} \alpha_j - q_j r_j\bigg),\]
where $r_j = \alpha_j+\beta_j-m_j$ (which is also an eigenvalue of $R_j$). Therefore, the following are sufficient conditions for synchronisation.
\begin{itemize}	
\item[\textbf{(SC1)}] There exists a constant $\mu_F$, such that $\frac{1}{\hm_i} \sum_{j \in F\cap N_i} (2q_j-1)r_j =\mu_F$, $\forall \, i \in F$.
\item[\textbf{(SC2)}] There exist a constant $\mu_0$, such that $\frac{1}{\hm_i} \bigg(\sum_{j \in S\cap N_i} Z_j^0 (2q_j-1)r_j + \alpha_j - q_j r_j\bigg) =\mu_0$, $\forall \, i \in F$. 
 \end{itemize}
The above conditions ensure that different components of the vector in the expression in equation \eqref{eq:expr-synch} are constant, leading to synchronisation within the framework of Theorem~\ref{thm:convergence}.  Note that $\mu_F=1$ occurs only when $\alpha_j = \beta_j = m_j$ and $q_j=1$ for all $j$, that is when all nodes are preferential and of P\'olya type -- a case not considered in this paper and discussed briefly in Section~\ref{sec:sim}.

Another way to understand synchronisation conditions is as follows -- let $f_i(\bZ^t) = \frac{1}{\hm_i} \IE[W_i^{t+1} - W_i^t\vert \FF_t]$ be the average proportion of white balls added to urn $i$ at time $t+1$ given $\FF_t$. Then using \eqref{eq:chi_q} and \eqref{rec} we find 
\begin{eqnarray*}
f_i(\bZ^t) &=& \frac{1}{\hm_i} \IE[W_i^{t+1} - W_i^t\vert \FF_t] \\
&=& \frac{1}{\hm_i} \sum_{j \in N_i} \alpha_j (q_j Z_j^t + (1-q_j)(1-Z_j^t)) + (m_j - \beta_j)(q_j (1-Z_j^t) + (1-q_j)Z_j) \\
&=& \frac{1}{\hm_i} \sum_{j \in N_i} Z_j^t (2q_j -1) r_j + \alpha_j - q_j r_j.
\end{eqnarray*}
We can decompose $f_i$ into $f_i = f_i^{(fixed)} + f_i^{(random)}$, where $f_i^{(fixed)} = \frac{1}{\hm_i} \sum_{j \in N_i \cap S} Z_j^0 (2q_j -1) r_j + \alpha_j - q_j r_j$ and $f_i^{(random)} = \frac{1}{\hm_i} \sum_{j \in N_i \cap F} Z_j^t (2q_j -1) r_j$.
The synchronisation occurs when the fixed part is the same for all $i$ and the random part changes with the same rate in the direction $(1,1,\dots, 1)$, which is given by $\langle \bone, \nabla f_i^{(random)}(\bZ^t) \rangle= \frac{1}{\hm_i} \sum_{j \in N_i \cap F} (2q_j -1) r_j$.
 
 \begin{corollary}[Synchronisation]\label{Cor-sync}
Suppose the conditions of Theorem~\ref{thm:convergence} hold. Then, under the synchronisation conditions \textbf{(SC1)} and \textbf{(SC2)}, 
 \[Z_i^t \cas z^\star = \frac{\mu_0}{1- \mu_F}, \quad \text{as } t \to \infty \text{ for every } i\in F.\]
\end{corollary}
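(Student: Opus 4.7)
The plan is to start from Theorem~\ref{thm:convergence}, which already guarantees $\bZ^t \cas \bZ^\star$ under the assumed hypotheses, and then show that under the three additional conditions (SC1)--(SC3) the explicit limit formula \eqref{eq:limitZt} collapses to a constant row vector $z^\star \bone$. Once that identity is established, the synchronisation $Z_i^t \cas z^\star$ for every $i \in F$ follows at once from the definition of $\bZ_F^\star$.

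First I would substitute (SC2) and (SC3) directly into the bracket appearing in \eqref{eq:limitZt}. By definition these conditions say precisely that $\bZ_S^0 (\II B \tA)_{SF} = \mu_S \bone$ and $({\bf e}B\tA)_F + ((\bone - \bb)\tA)_F = \delta \bone$, so the bracket simplifies to the row vector $(\mu_S + \delta)\bone$. This reduces the limit formula to
\[
\bZ_F^\star \;=\; (\mu_S + \delta)\, \bone\, \bigl( I - (\II B \tA)_F \bigr)^{-1}.
\]
Next, I would invoke (SC1), which says $\bone$ is a left eigenvector of $(\II B\tA)_F$ with eigenvalue $\mu_F$. A one-line calculation then shows $\bone (I - (\II B\tA)_F)^{-1} = (1 - \mu_F)^{-1} \bone$: setting $v = \alpha \bone$ and solving $v (I - (\II B\tA)_F) = \bone$ gives $\alpha(1-\mu_F) = 1$. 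Plugging this back in yields $\bZ_F^\star = \tfrac{\mu_S + \delta}{1 - \mu_F}\, \bone = z^\star \bone$, which together with the a.s.\ convergence from Theorem~\ref{thm:convergence} proves the corollary.

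The computation is essentially routine; the only subtle point is ensuring $\mu_F \neq 1$ so that the inversion step above is legitimate. This is not a new obstacle, however, because Theorem~\ref{thm:convergence} already requires $(I - (\II B\tA)_F)$ to be invertible for \eqref{eq:limitZt} to make sense, and the structure of the scaled adjacency matrix $\tA = MA\hM^{-1}$ (whose rows supported on $F$ sum to at most $1$) combined with $|B_{ii}| \le 1$ forces the spectral radius of $(\II B\tA)_F$ to stay strictly below $1$ under the hypotheses of Theorem~\ref{thm:convergence}. Hence $\mu_F < 1$ and the conclusion $z^\star = (\delta + \mu_S)/(1 - \mu_F)$ is well-defined and positive.
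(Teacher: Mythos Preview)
Your argument is correct and matches the paper's own proof almost exactly: both plug (SC1)--(SC3) into the limit formula \eqref{eq:limitZt} and solve $z^\star(1-\mu_F)\bone = (\mu_S+\delta)\bone$, the only cosmetic difference being that the paper multiplies $z^\star\bone$ through $I-(\II B\tA)_F$ while you apply the inverse to $\bone$.

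One small caveat: in your final paragraph the spectral-radius justification is shakier than you need it to be (it is the \emph{columns} of $\tA$, not the rows, that are stochastic on $F$, and in case~(iii) with $B=I$ the spectral radius of $(\II\tA)_F$ can actually equal~$1$). The clean way to get $\mu_F\neq 1$ is the one you already gesture at: Theorem~\ref{thm:convergence} establishes that $I-(\II B\tA)_F$ is invertible, so $1$ is not an eigenvalue of $(\II B\tA)_F$, and since (SC1) makes $\mu_F$ a left eigenvalue, $\mu_F\neq 1$ follows immediately.
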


\begin{remark}
Note that these conditions are only sufficient and not necessary. For instance, on a cycle graph with all P\'olya type nodes such that only one node is de-preferential, while condition (iv) of Theorem~\ref{thm:convergence} holds (also see Case 1 discussed in Section~\ref{sec:simulations-results}), \textbf{(SC1) } does not hold. However, it is easy to check that the fraction of balls of either colour synchronises to a deterministic limit of $1/2$. 
\end{remark}

\begin{corollary}[Synchronisation in extreme cases]\label{Cor-sync-pref}
Suppose either condition (i), (ii), or (iii) of Theorem~\ref{thm:convergence} hold. Further, suppose the following (special synchronisation) conditions hold. 
\begin{itemize}	
\item[\textbf{(SSC1)}] 
There exist $\alpha^F,  \alpha^S, \beta^F, \beta^S, m^F, m^S\in \IZ_{\geq 0}$ with $\alpha^F+\beta^F < 2m^F+m^S$, such that for every $i\in F$, $\sum\limits_{j \in N_i\cap S} m_j = m^S, \sum\limits_{j \in N_i\cap S} \beta_j = \beta^S, \sum\limits_{j \in N_i\cap S} \alpha_j =\alpha^S$ and
\[\sum\limits_{j \in N_i\cap F} R_j = \begin{pmatrix} \alpha^F & m^F -\alpha^F \\
m^F-\beta^F & \beta^F \end{pmatrix}.\]
\item[\textbf{(SSC2)}] If $S\neq \emptyset$, there exist $\alpha^{0, S}, \beta^{0, S}, m^{0, S} \in \IZ_{\geq 0}$ such that for every $i\in F$ 
\[\sum\limits_{j \in N_i\cap S} Z_j^0 R_j = \begin{pmatrix} \alpha^{0, S} & m^{0, S} - \alpha^{0, S} \\
m^{0, S} -\beta^{0, S} & \beta^{0, S} \end{pmatrix}.\]
 \end{itemize}
Then,
\begin{enumerate}[(a)]
\item When there are no de-preferential nodes in the graph, then as $t \to\infty $
 \[Z_i^t \cas z^\star = \frac{m^F+m^S-\beta^F-\beta^S-(m^{0, S} -\alpha^{0, S}-\beta^{0, S})}{2m^F+m^S-\alpha^F-\beta^F},\quad \forall \, i\in F.\]
In particular, if $S=\emptyset$ and synchronisation condition \textbf{(SSC1)} holds, then for every $i \in V$, $Z_i^t \cas \frac{m^F-\beta^F}{2m^F-\alpha^F-\beta^F}$, as $t \to \infty$. 

\item When there are no preferential nodes in the graph,
 \[Z_i^t \cas z^\star = \frac{\alpha^F+\alpha^S + m^{0, S} -\alpha^{0, S}-\beta^{0, S}}{m^S+\alpha^F+\beta^F},\quad \text{as } t \to \infty \text{ for every } i\in F.\]
 In particular, if $S=\emptyset$ and fraction of white balls asymptotically synchronise to $c \in [0, 1]$ if for all $i \in [N]$, $(1-c) \sum_{j \in N_i} \alpha_j = c \sum_{j \in N_i} \beta_j$.
\end{enumerate}
\end{corollary}
Note that in both the cases, when $S=\emptyset$, the urns synchronise to $1/2$ provided that $\alpha^F=\beta^F$, that is, for every $i \in [N]$, $\sum_{j \in N_i} R_j$ is a classical Friedman-type replacement matrix.


\subsection{Proofs}
The main tool in analyzing the asymptotic properties of the fraction of white balls across urns is to write an appropriate stochastic approximation scheme (see \cite{borkar2008stochastic, Zhang2016}) for the vector $\bZ^t_F$. Using \eqref{eq:chi_q} and \eqref{rec}, we derive the recursion for the proportion of white balls in the urn at node $i\in F$ as follows:
\begin{align}
 Z_i^{t+1} 
 & = \frac{1}{T_i^{t+1}} W_i^{t+1} \nonumber \\
 &= \frac{T_i^t}{T_i^{t+1}} Z_i^t +\frac{1}{T_i^{t+1}} \sum_{j \in N_i} \left[\alpha_j \chi_j^{t+1}+ (m_j-\beta_j) (1-\chi_j^{t+1}) \right]
 \label{recur:1} \\ 
 & = Z_i^t -\frac{\hm_i}{T_i^{t+1}} Z_i^t
 +\frac{1}{T_i^{t+1}} \sum_{j \in N_i} m_j (a_j+b_j - 1) \chi_j^{t+1}+\frac{1}{T_i^{t+1}} \sum_{j \in N_i} m_j (1-b_j). \nonumber 
 \end{align}
Now, we write the above recursion in vector form as follows:
 \begin{align}
 \bZ^{t+1}_F 
 & =\bZ^t_F + \left[ - \bZ^t_F+ (\chi^{t+1} B \tA )_F + \big( (\bone -\bb) \tA \big)_F \right] \left(\hM \, \left(\bT^{t+1}\right)^{-1} \right)_F \nonumber \\
 & =\bZ^t_F + \left[ - \bZ^t_F+ (\IE[\chi^{t+1}|\FF_t] B \tA )_F + \big( (\bone -\bb) \tA \big)_F + (\Delta \chi^{t+1} B \tA )_F \right] \left(\hM \, \left(\bT^{t+1}\right)^{-1} \right)_F \nonumber \\
 & =\bZ^t_F + \left[h(\bZ^t_F) + (\Delta \chi^{t+1} B \tA )_F \right] \hM_F \left(\bT^{t+1}_F\right)^{-1} \nonumber \\
 & =\bZ^t_F + \frac{1}{t+1} \left[h(\bZ^t_F) + (\Delta \chi^{t+1} B \tA )_F \right] \hM_F +\epsilon _t, \label{Rec:vector0}
 \end{align} 
where $\Delta \chi_j^{t+1}= \chi_j^{t+1}- \IE[\chi_j^{t+1}| \FF_t]$ is a martingale difference sequence and 
\[\epsilon_t =\bZ^t_F + \left[h(\bZ^t_F) + (\Delta \chi^{t+1} B \tA )_F \right] \hM_F \left(\left(\bT^{t+1}_F\right)^{-1} -\frac{1}{t+1} \right)\]
and the function $h:[0,1]^{|F|} \to [0,1]^{|F|}$ is such that (using \eqref{eq:mean:chi} we get)
\begin{align*}
h(\bZ^t_F) 
&= - \bZ^t_F+ (\IE[\chi^{t+1}|\FF_t] B \tA )_F + \big( (\bone -\bb) \tA \big)_F\\
& = - \bZ^t_F+(\bZ^t\W)_F + \big( (\bone - \bq)B\tA \big)_F + \big( (\bone -\bb) \tA \big)_F \\
& = - \bZ^t_F+(\bZ^t\W)_F - \big(\bq B\tA \big)_F + \big(\left(\bone B+ (\bone -\bb) \right) \tA \big)_F \\
& = - \bZ^t_F+ \bZ^t_F\W_F + \bZ^0_S \W_{SF} - \big(\bq B\tA \big)_F + \big(\ba \tA \big)_F. 
\end{align*}

\noindent
Thus for $\bZ\in [0,1]^N$
\begin{equation}\label{hfun}
h(\bZ_F) = - \bZ_F \left[I - \W_F \right] + \bZ_S^0 \W_{SF} +(\ba \tA)_F - (\bq B\tA)_F,
\end{equation}
where $\bq = \left(q_1, \dots, q_N \right)$ is as defined in Theorem~\ref{thm:convergence}. Since $\bT^t = \bT^0 + t \,\hM$, we have $\hM_F (\bT^t)^{-1}_F = \OO(\frac{1}{t})$. Therefore the above recursion can be written as a stochastic approximation recursion with $\gamma_t = \frac{1}{t}$ and $\{\epsilon_t \}_{t\geq 1}$ such that $\epsilon_t \to 0$, as $t \to \infty$. Then from the theory of stochastic approximation \cite{borkar2008stochastic, Zhang2016}, we know that the process $\bZ^t_F$ converges almost surely to the stable limit points of the solutions of the O.D.E. given by $\dot{\bz} = h(\bz)$. Hence from \eqref{hfun}, whenever $I - (\II B\tA)_F$ is invertible, the unique equilibrium point is given by 
\[\bZ_F^\star \coloneqq \big[\bZ_S^0 (\II B \tA)_{SF} + ({\ba} \tA)_F - (\bq B\tA)_F\big] \left(I - (\II B\tA)_F \right)^{-1}.\]
Hence it is enough to show that $I - (\II B\tA)_F$ is invertible under the conditions of Theorem~\ref{thm:convergence}. 

We now show that under the conditions of Theorem~\ref{thm:convergence}, $I - (\II B\tA)_F$ is invertible. 
\begin{proof}[Proof of Theorem~\ref{thm:convergence}]
Suppose $I - (\II B\tA)_F$ is not invertible, then there exists non-zero vector $v \in \IC^{|F|}$ satisfying $(I - (\II B\tA)_F)v= \bzero$. This implies that $v = (\II B M A \hM^{-1})_F\, v$. In other words, for every $k \in F$, we have 
\begin{equation} \label{eq:rel}
\frac{v_k}{\II_{kk} B_{kk}} = \frac{\sum\limits_{i \in N_k \cap F} m_i v_i}{\sum\limits_{i \in N_k} m_i}.
\end{equation}
Let $j = \arg\max_i |v_i|$. We denote the normalized vector $v$ as $\tv = \dfrac{v}{|v_j|}$. Therefore, \eqref{eq:rel} can be written as
\begin{equation} \label{main_rel}
\frac{\tv_k}{\II_{kk} B_{kk}} = \frac{\sum\limits_{i \in N_k \cap F} m_i \tv_i}{\sum\limits_{i \in N_k} m_i}, \qquad \forall \, k \in F,
\end{equation}
where $| \tv_k | \leq 1$ for all $k \in |F|$ and $|\tv_j|=1$. We first show that if $|\tv_k|=1$, then $k$ cannot be non-P\'olya type node. From \eqref{main_rel} we have 
 \[\Big \vert \frac{\tv_k}{\II_{kk} B_{kk}} \Big \vert= \Bigg \vert \frac{\sum\limits_{i \in N_k \cap F} m_i \tv_i}{\sum\limits_{i \in N_k} m_i} \Bigg \vert.\]
However, under the assumption we have $\Big \vert \frac{\tv_k}{\II_{kk} B_{kk}} \Big \vert = \dfrac{1}{|2q_k-1|| a_k + b_k -1|} > 1$. On the other hand, the right-hand side is
 $\Big \vert \frac{\sum\limits_{i \in N_k \cap F} m_i \tv_i}{\sum\limits_{i \in N_k} m_i} \Big \vert \leq 1$ (since $\tv_i \leq 1, \forall \, i$). This contradiction implies that $k$ cannot be a non-P\'olya type node. Now, let us consider the following cases:

\begin{enumerate}
\item Suppose $q_j \in (0, 1)$. From \eqref{main_rel} we have 
 \[\Big \vert \frac{\tv_j}{\II_{jj} B_{jj}} \Big \vert= \Bigg \vert \frac{\sum\limits_{i \in N_j \cap F} m_i \tv_i}{\sum\limits_{i \in N_j} m_i} \Bigg \vert, \]
where $\Bigg \vert \frac{\sum\limits_{i \in N_j \cap F} m_i \tv_i}{\sum\limits_{i \in N_j} m_i} \Bigg \vert \leq 1$. However, $\Big \vert \frac{\tv_j}{\II_{jj} B_{jj}} \Big \vert = \Big \vert \frac{1}{(2q_j-1) B_{jj}} \Big \vert > 1$ since $|B_{jj}| \leq 1$ and $|2q_j-1| < 1$. This leads to a contradiction. Now, suppose $q_r \in (0, 1)$ for some $r \neq j$. Since $j$ is a P\'olya type node, from \eqref{main_rel} we get
\begin{equation}
1 = \Big \vert \frac{\tv_j}{\II_{jj} B_{jj}} \Big \vert= \Bigg \vert \frac{\sum\limits_{i \in N_j \cap F} m_i \tv_i}{\sum\limits_{i \in N_j} m_i} \Bigg \vert. 
\end{equation}
Considering $0 \leq \sum\limits_{i \in N_j \cap F} m_i \leq \sum\limits_{i \in N_j} m_i$ and $|\tv_i | \leq 1$, the only possibility for the equality in \eqref{main_rel} to hold is when
\begin{equation} \label{cond11} 
N_j \cap F = N_j \quad \text { and } \quad |\tv_i | = 1 \qquad \forall \, i \in N_j. 
\end{equation} 
Thus, all $i \in N_j$ are also P\'olya type. Now, consider a directed path from $r$ to $j$, denoted by $(r, i_1, i_2, \dots, i_l, j)$. By the above argument $r, i_1, i_2, \dots, i_l$ are all P\'olya type nodes. Now,
\begin{equation}
1 < \frac{1}{|2q_r-1|} = \Big \vert \frac{\tv_r}{\II_{rr} B_{rr}} \Big \vert= \Bigg \vert \frac{\sum\limits_{i \in N_r \cap F} m_i \tv_i}{\sum\limits_{i \in N_r} m_i} \Bigg \vert,
\end{equation}
which is a contradiction. For rest of the proof, we assume that $q_i \in \{ 0, 1\}$ for all $i \in [N]$. 

\item We show that the theorem holds under condition (ii) of Theorem~\ref{thm:convergence}. Since $j$ cannot be a non-P\'olya type node, it follows that $j$ must be a P\'olya type node. Therefore, we have $B_{jj}=1$ and thus from \eqref{main_rel} we get
\begin{equation}\label{cond10}
1 = \Big \vert \frac{\tv_j}{\II_{jj} B_{jj}} \Big \vert= \Bigg \vert \frac{\sum\limits_{i \in N_j \cap F} m_i \tv_i}{\sum\limits_{i \in N_j} m_i} \Bigg \vert. 
\end{equation}
Considering $0 \leq \sum\limits_{i \in N_j \cap F} m_i \leq \sum\limits_{i \in N_j} m_i$ and $|\tv_i | \leq 1$, the only possibility for the equality in \eqref{main_rel} to hold is when
\begin{equation} \label{cond11} 
N_j \cap F = N_j \quad \text {and} \quad |\tv_i | = 1, \qquad \forall \, i \in N_j. 
\end{equation} 
Now consider a directed path from a non-P\'olya node $k$ to $j$, denoted by $(i_1, \dots, i_l)$, such that $i_1, \dots, i_l$ are all P\'olya type nodes. Such a node $k$ and a path always exists since $F$ is strongly connected.
Then, from the previous argument, we know that $| \tv_{i_1} |= \dots = | \tv_{i_l}| = | \tv_k |=1$. However, this leads to a similar contradiction as before. Therefore, if there is at least one non-P\'olya type node in $F$, it ensures that $I- (\II B\tA)_F$ is invertible.

\item When $S \neq \emptyset$ and there exists a $f \in F$ which is non-P\'olya, then by $(i)$, $I- (\II B\tA)_F$ is invertible. Now we consider the case when $S \neq \emptyset$, and all nodes in $F$ are P\'olya type. Then by \eqref{main_rel} we get
$1 = \Big \vert \frac{\tv_j}{\II_{jj} B_{jj}} \Big \vert= \Bigg \vert \frac{\sum\limits_{i \in N_j \cap F} m_i \tv_i}{\sum\limits_{i \in N_j} m_i} \Bigg \vert.$
This implies that 
\begin{equation} \label{cond23} 
N_j \cap F = N_j \quad \text { and} \quad |\tv_i| = 1, \qquad \forall \, i \in N_j. 
\end{equation}
Note that when $S\neq \emptyset$, there exists a node $s \in S$ and $f \in F$ such that $s\to f$. Since $F$ is strongly connected, there exists a path $f \rightsquigarrow j$ say $(f=f_0, f_1, \dots, f_{r-1}, f_r= j)$. Along this path, for all $0 \leq m \leq r$, using the same argument as above for $f_m$ we get, $|\tv_k| = 1 \ \forall \, k \in N_{f_m}$ and $N_{f_m} \cap F = N_{f_m}$. However, this gives a contradiction for $f_0$, as $N_{f_0} \cap F \subsetneq N_{f_0}$. 

\item Let $j = \arg\max_i |\Re(v_i)|$. We denote the normalized real part of vector $v$ as $\bar{v} = \dfrac{\Re(v)}{\max_i |\Re(v_i)|}$. Therefore, \eqref{eq:rel} can be written as
\begin{equation} \label{main_rel2}
\frac{\bar{v}_k}{\II_{kk} B_{kk}} = \frac{\sum\limits_{i \in N_k \cap F} m_i \bar{v}_i}{\sum\limits_{i \in N_k} m_i},
\end{equation}
where $| \bar{v}_k | \leq 1 $for all $k \in |F|$ and $|\bar{v}_j|=1$. Assume that all nodes are P\'olya type. In this case, we have $B=I$ and we assume $S=\emptyset$. First, suppose $j$ is a de-preferential node. When $\bar{v}_j = 1 $then from \eqref{main_rel2} we get
$ -1 = \frac{\bar{v}_j}{\II_{jj} B_{jj}} = \frac{\sum\limits_{i \in N_j} m_i \bar{v}_i}{\sum\limits_{i \in N_j} m_i}.$
This implies
\begin{equation}\label{cond12}
\bar{v}_i = -1 \qquad \forall \, i \in N_j.
\end{equation}
Similarly when $\bar{v}_j = -1$, from \eqref{main_rel2} we get
\begin{equation}\label{cond13}
\bar{v}_i = 1 \qquad \forall \, i \in N_j. 
\end{equation}
We now show that if $\bar{v}$ exists then there is a graph partition $\GG(P_1, P_2, D_1, D_2)$. 

From Algorithm~\ref{alg:graph_partition} (see Appendix~\ref{sec:exploration-algorithm}), in Step 2 we initialize the sets as $D_1 =\{j \}, D_2 = P_1=P_2= \emptyset$ and repeat Step 8 to Step 11 until all the nodes are covered. Then from \eqref{cond12} and \eqref{cond13}, we get $\bar{v}_i = 1, \forall \, i \in D_1$, $\bar{v}_i = -1, \forall \, i \in D_2$, $\bar{v}_i = 1, \forall \, i \in P_1$ and $\bar{v}_i = -1, \forall \, i \in P_2$. Therefore if $\bar{v}$ exists, then there can be no re-assignment of nodes in Step 13 thereby resulting in a valid graph partition $\GG(P_1, P_2, D_1, D_2)$. Similarly, when $j$ is preferential, if $\bar{v}$ exists then a valid graph partition $\GG(P_1, P_2, D_1, D_2)$ exists with $j\in P_1$. Therefore, $I- (\II B \tA)_F$ is invertible whenever $F$ does not admit a graph partition. 
\end{enumerate}
\end{proof}
The graph exploration process in Algorithm~\ref{alg:graph_partition} (Appendix~\ref{sec:exploration-algorithm}) is motivated by the argument given above. It is easy to see that if such a vector $v$ exists then $P _1= \{i \in \PP : \bar{v}_i=1 \}, P_2= \{i \in \PP: \bar{v}_i=-1 \}, D_1= \{i \in \DD: \bar{v}_i=1 \}$ and $D_2= \{i \in \DD: \bar{v}_i=-1 \}$ forms a valid graph partition. Thus, the existence of graph partitions is equivalent to the existence of a non-zero vector $v$ such that $(I - (\II B \tA)_F)v=0$. We now prove Corollary~\ref{cor:gen_conv} which extends the result to a weakly connected directed graph. 
\begin{proof}[Proof of Corollary~\ref{cor:gen_conv}]
For an arbitrary graph $F$ with strongly connected components $F_1, \dots, F_k$, $\tA_F$ can be expressed as an upper block triangular matrix:
\[\tA_F = 
\begin{pmatrix}
\tA_{F_1} &\tA_{F_1 F_2} & \dots & \tA_{F_1 F_k}\\
0&\tA_{F_2} & \dots & \tA_{F_2 F_k}\\
\vdots &\vdots & \ddots & \vdots \\
0&0&\dots & \tA_{F_k} 
\end{pmatrix},\]
where $\tA_{F_i F_j} = M_{F_i} A_{F_i F_j} \hM^{-1} _{F_j}$ is a $|F_i|\times |F_j|$ matrix such that non-diagonal blocks are not all $\bzero$. Let $I_{F_i}$ be a $|F_i|\times |F_i|$ identity matrix. Note that $I- \II B \tA$ is invertible if and only if each $I_{F_i} - (\II B\tA)_{F_i}$ is invertible for $1 \leq i \leq k$. Suppose $F_r$ is a stubborn block then the proof of Theorem~\ref{thm:convergence} implies that $I_{F_r} - (\II B\tA)_{F_r}$ is invertible. Now for a flexible block $F_r$, there exists a node $j \in F_r$ such that $N_j \cap F_r \subsetneq N_j$. Then using the same argument as case (iii) in the proof of Theorem~\ref{thm:convergence}, we conclude that $I_{F_r} - (\II B\tA)_{F_r}$ is invertible for all $1\leq r \leq k$.
\end{proof}

\begin{proof}[Proof of Corollary~\ref{Cor-sync}]
Synchronisation occurs when $\bZ_F^\star= z^\star \bone$, for some constant $z^\star \in [0,1]$. From Theorem~\ref{thm:convergence}, this condition holds if
\[z^\star \bone (I- (\II B\tA)_F) = \bZ_S^0 (\II B \tA)_{SF} + ({\bf e} B\tA)_F + ((\bone-\bb) \tA)_F.\]
Then, under conditions \textbf{(SC1) and (SC2)}, we have $z^\star(1-\mu_F) \bone = \mu_0\bone$. Thus, $z^\star = \dfrac{\mu_0}{1-\mu_F} \bone$ is the synchronisation limit under these conditions and as $t \to \infty$.
\end{proof}

\begin{proof}[Proof of Corollary~\ref{Cor-sync-pref}]
Note that \textbf{(SSC1)} and \textbf{(SSC2)} imply \textbf{(SC1) and (SC2)}, with $\mu_F = \dfrac{\alpha^F+\beta^F- m^F}{m^F+m^S}$, $\mu_0 = \dfrac{\alpha^{0,S}+\beta^{0,S}- m^{0,S}}{m^{F}+m^S} + 1- \dfrac{\beta^F+\beta^S}{m^F+m^S}$. Therefore synchronisation occurs and we get
\begin{equation}\label{sync-pref}
z^\star = \frac{m^F -\beta^F - \beta^S + m^S -(m^{0, S} -\alpha^{0, S} -\beta^{0, S})}{2m^F+m^S - \alpha^F -\beta^F}.
\end{equation}
 When $S=\emptyset$, we get $z^\star = \dfrac{m^F -\beta^F}{2m^F- \alpha^F -\beta^F}$. The proof for the case when all nodes are de-preferential is similar.
\end{proof}

\begin{remark}
Note that \textbf{(SSC1)} implies that if all nodes are P\'olya type (i.e. $m^F = \alpha^F = \beta^F$, $m^S =\beta^S$, and $m^{0, S} = \alpha^{0, S} = \beta^{0, S}$) then there is at least one stubborn node in the in-neighbourhood of every node. In that case \eqref{sync-pref} reduces to $\frac{\sum_{i \in N_j\cap S} Z_i^0 m_i}{\sum_{i \in N_j\cap S} m_i}$. Thus, the limiting fraction of white balls is a weighted average of the initial fraction of white balls in the stubborn nodes of the in-neighbourhood. 
\end{remark}


\section{Fluctuation Results} \label{sec:fluc}
We now state the fluctuation results for $\bZ^t_F$ around the almost sure limit $\bZ^\star_F$. Suppose $\lambda_{\min}(Q)$ denote the real part of the eigenvalue of a matrix $Q$ with the minimum real part and by $\Re(z)$ of a complex number $z$, we mean the real part of $z$. Define $\rho \coloneqq \lambda_{\min}(I-\W_F)$, where $I$ is a $|F|\times |F|$ identity matrix and $\W \coloneqq \II B \tA$. Note that $\W = \bzero$ when $\bq =1/2\,\bone$ (i.e. $\II = \bzero$). For the case when $q_i \neq 1/2$ for all $i$, we assume that $\W$ is diagonalisable, i.e. there exists an invertible matrix $U$ with $V = U^{-1}$ such that 
\begin{equation}\label{Wdecomposition}
\W = U \Lambda V = U \, \Diag (\lambda_1, \dots, \lambda_{|F|}, \bzero_S) V, 
\end{equation}
where $\lambda_1, \dots, \lambda_{|F|}$ are the eigenvalues of $\W_F$. Let column vectors $u_1, \dots, u_N$ and row vectors $v_1, \dots, v_N$ be the right and left eigenvectors of $\W$ with respect to the eigenvalues $\lambda_1, \dots, \lambda_N$ respectively. Then $U = \begin{pmatrix} u_1 & \dots & u_N \end{pmatrix}$ and $V^\top = \begin{pmatrix} v_1^\top & \dots & v_N^\top \end{pmatrix}$.

\begin{theorem}[Fluctuation of $\bZ^t$] \label{Thm: CLT-rho> 1/2}
Suppose $\bZ^t_F \cas \bZ_F^\star$ as $t \to \infty$. Then
\begin{enumerate}
 \item If $\rho >1/2$, as $t \to \infty$ 
\begin{equation}\label{Sigma-1}
\sqrt{t} \left(\bZ^t_F - \bZ_F^\star \right) \; \cad \NN \left(\bzero, \Sigma \right) \ \text{with } \Sigma = \int_0^\infty \left(e^{-\left (\frac{1}{2} I-\small {\bf W}_F \right)u} \right)^\top \Gamma e^{-\left (\frac{1}{2}I- \small {\bf W} _F\right)u} du.
\end{equation}
 \item If $\rho =1/2$ with multiplicity 1, as $t \to \infty$ 
 \begin{equation}\label{Sigma-2}
\sqrt{\frac{t}{\log(t)}} \left(\bZ^t_F - \bZ_F^\star \right) \; \cad \NN \left(\bzero, \Sigma \right)
\end{equation}
with
\begin{equation}
\Sigma = \lim_{t\to \infty} \frac{1}{\log(t)} \int_0^{\log(t)} \left(e^{-\left (1/2 I-\small {\bf W}_F \right)u} \right)^\top \Gamma e^{-\left (1/2I- \small {\bf W} _F\right)u} du.
\end{equation}
Here $\Gamma = \big(- \W^\top \Theta \W + \frac{1}{4} \tA^\top B^2 \tA\big)_F$ and $\Theta$ is the $N\times N$ diagonal matrix such that $[\Theta]_{i,i}= \left(Z_i^\star-\frac{1}{2}\right)^2$.
\end{enumerate}
\end{theorem}
For $\rho <1/2$, we refer the reader to Theorem 2.2 of \cite{Zhang2016}, which states that the limit of appropriately scaled $(\bZ^t_F-\bZ_F^\star)$ is close to a weighted sum of some finitely many complex random vectors. 

\begin{corollary}\label{cor:fluctuation_results}
The limiting variance $\Sigma$ can be simplified as follows:
\begin{enumerate}
\item When $\bq = 1/2\bone$. Then \eqref{Sigma-1} holds with
\[ \Sigma = \frac{1}{4} \big(\tA^\top B^2 \tA\big)_F.\]
\item When $q_i \neq 1/2$ for all $i$ and $\W$ has decomposition as in \eqref{Wdecomposition}, then for $\rho >1/2$, \eqref{Sigma-1} holds with $\Sigma$ such that
\[ [\Sigma]_{ij} = \sum_{k\in F} \sum_{\ell \in F} \frac{\lambda_k \lambda_\ell}{1-\lambda_k-\lambda_\ell} (u_k^\top \bar\Theta u_\ell) v_{ki} v_{lj}, \qquad \forall \, i,j \in F\] 
and for $\rho=1/2$, \eqref{Sigma-2} holds with 
\[ [\Sigma]_{ij} = \frac{1}{4} (u_1^\top \bar \Theta u_1 ) v_{1i}v_{1j}.\]
Here $\bar\Theta = -\Theta + \frac{1}{4}\II^{-2}$ is a $N\times N$ diagonal matrix such that $[\bar \Theta]_{i,i} = -\left(Z_i^\star-\frac{1}{2}\right)^2+\frac{1}{16}\left(q_i-\frac{1}{2}\right)^{-2}$. 
\end{enumerate}
\end{corollary}

\begin{corollary}[Fluctuation under synchronisation] \label{Cor-1}
Suppose $\W= \W^\top$ and $q_i\neq 1/2$ for all $i$, and $\bZ^\star$ is such that $\bar \Theta = c(\bq, \bZ^\star) I$, where $c(\bq, \bZ^\star)$ is a constant which depends only on $\bq$ and $\bZ^*$. Then,
\begin{enumerate}
\item If $\rho >1/2$, \eqref{Sigma-1} holds with $\Sigma = c(\bq, \bZ^\star) \W^2(I-2\W)^{-1}$.
\item If $\rho =1/2$ with multiplicity 1, \eqref{Sigma-2} holds with $\Sigma = c(\bq, \bZ^\star) \W^2 U^\top \begin{pmatrix} 1 &\bzero \\ \bzero &\bzero \end{pmatrix} U$. Further under \textbf{(SC1)}, $\Sigma =\dfrac{c\big(\bq, \bZ^\star\big)}{4N}J$. 
\end{enumerate}
In particular, if synchronisation occurs, i.e. $\bZ^\star = z^\star\bone$ for some $z^\star \in [0,1]$ and all nodes are either preferential or de-preferential (i.e. $q_i \in \{0,1\}$ for all $i$) then $c(\bq, \bZ^\star) = z^\star(1-z^\star)$. 
\end{corollary}

\begin{remark}[Multiplicity of $\rho$] 
The fluctuation theorem stated above gives an explicit expression for the limiting variance when $1/2$ is a simple eigenvalue of $\W$. When $1/2$ is not simple, a general description of the limiting variance can be found in \cite{Zhang2016}. For strongly connected $F$ where $\II = I$ (all nodes are preferential), the Perron-Frobenius theorem implies that the maximal eigenvalue of $\W$, and therefore $\rho$ is simple. In the presence of de-preferential nodes, classifying graphs and reinforcement matrices that lead to $\rho=1/2$ as a simple eigenvalue of $\W$ is more complex. For instance, consider a cycle graph with $n$ nodes with node-independent reinforcement where $\W=(a+b-1) \II A$. In this case, certain conditions can make $\rho =1/2$ a simple eigenvalue. Specifically, if $q_i \in \{0,1\}$ for all $i$, the characteristic polynomial of $\II A$ is $x^n + (-1)^{m-1}$ where $m$ is the number of de-preferential nodes. Thus, the eigenvalues of $\W$ depend on the zeroes of $x^n-1$ when $m$ is even, and zeroes of $x^n+1$, when $m$ is odd. Since $1$ is always a simple eigenvalue in the first case, $\rho=1/2$ can also be a simple eigenvalue. For example, in a cycle graph with 8 nodes (as in Figure~\ref{fig:evencycle}), where $\tA =A$, the eigenvalues of $I-\W = I- \II A$ are $1, -1, \frac{-1+i}{\sqrt{2}}, \frac{-1-i}{\sqrt{2}}, \frac{1+i}{\sqrt{2}}, \frac{1-i}{\sqrt{2}}, i, -i$. Thus $\lambda_{\min}(I-\W) = \rho=1/2$ is a simple eigenvalue when $a+b-1=1/2$. 
\end{remark}

\subsection{Proofs of Fluctuation Results}

\begin{proof}[Proof of Theorem~\ref{Thm: CLT-rho> 1/2}]
From \eqref{hfun} we have 
\[h(\bZ_F) = - \bZ_F \left[I - \W_F \right] + \bZ_S^0 \W_{SF} +(\ba \tA)_F - (\bq B\tA)_F.\]
Thus $\frac{\partial h(z)}{\partial z} = -I + \W_F$. Thus when $\rho > 1/2$, we apply Theorem 2.2 of \cite{Zhang2016} and get $\sqrt{t} \left(\bZ^t_F - \bZ_F^\star \right) \; \cad \NN \left(\bzero, \Sigma \right)$
where $\Sigma$ is defined as
\[\Sigma = \int_0^\infty \left(e^{-\left (I-\small {\bf W}_F - \frac{1}{2} I \right)u} \right)^\top \Gamma e^{-\left (I- \small {\bf W} _F- \frac{1}{2} I \right)u} du.\]
Similarly, when $\rho = 1/2$ with multiplicity 1, using Theorem 2.2 of \cite{Zhang2016} we get\\ $\sqrt{\frac{t}{\log(t)}}\left(\bZ^t_F -\bZ_F^\star \right) \; \cad \NN \left(\bzero, \Sigma \right)$
where $\Sigma$ is defined as
\[\Sigma = \lim_{t\to \infty} \int_0^{\log(t)} \left(e^{-\left (I-\small {\bf W}_F - \frac{1}{2} I \right)u} \right)^\top \Gamma e^{-\left (I- \small {\bf W} _F- \frac{1}{2} I \right)u} du.\]
Here $\Gamma = \lim_{t\to \infty} \IE \left[\ \left((\Delta \chi^{t+1} B \tA )_F \right)^\top \left((\Delta \chi^{t+1} B \tA )\right)_F \big\vert \FF_t \right]$. To compute $\Gamma$, we use
\begin{align*}
\lim_{t\to \infty} \IE \left[\ \left(\Delta \chi^{t+1} B \tA \right)^\top \left(\Delta \chi^{t+1} B \tA \right) \big\vert \FF_t \right]
 & = (B\tA) ^\top \IE \Big[\Delta (\chi^{t+1})^\top \Delta \chi^{t+1}) \big\vert \FF_t \Big] B\tA.
\end{align*}
From the variance expression obtained in \eqref{eq:var:chi} we get
\begin{align*}
\lim_{t\to \infty} \IE \left[\ \left(\Delta \chi^{t+1} B \tA \right)^\top \left(\Delta \chi^{t+1} B \tA \right) \big\vert \FF_t \right]
& = \lim_{t\to \infty} (B\tA)^\top \Var \bigg(\Delta \chi^{t+1} \big\vert \FF_t \bigg) B\tA \\
& = (\II B\tA)^\top \big( -\Theta \big) (\II B\tA) + \frac{1}{4} \tA^\top B^2 \tA. \\
& = - \W^\top \Theta \W + \frac{1}{4} \tA^\top B^2 \tA.
 \end{align*} 
 Thus $\Gamma = \big(- \W^\top \Theta \W + \frac{1}{4} \tA^\top B^2 \tA\big)_F.$ This completes the proof.
\end{proof}

\begin{proof}[Proof of Corollary~\ref{cor:fluctuation_results}]
Consider the following two cases:
\begin{enumerate}
\item When $\bq = \frac{1}{2} \bone$, we get $\II = \W = \bzero_{N\times N}$, thus $\rho =1$ and $\Gamma = \frac{1}{4}\big(\tA^\top B^2 \tA\big)_F$. Hence $\sqrt{t}\left(\bZ^t_F - \bZ_F^\star \right) \; \cad \NN \left(\bzero, \Sigma \right)$ where 
$\Sigma = \Gamma \int_0^\infty e^{-u} du = \Gamma = \frac{1}{4} \big(\tA^\top B^2 \tA\big)_F.$

\item When $q_i \neq 1/2$ for all $i$. Here, since $\II$ is invertible we can write
\begin{equation}\label{eq:Gamma-q-neq-1/2}
\Gamma = - \W^\top \Theta \W + \frac{1}{4}\W^\top \II^{-2} \W = \W^\top \bar\Theta\W
\end{equation}
where $\bar\Theta = -\Theta + \frac{1}{4}\II^{-2}$. 
Assuming the decomposition for $\W$, we have $\W = U\Lambda V$ with $V = U^{-1}$. Therefore 
\begin{align}
\Gamma &= \left[\W^\top \bar\Theta \W \right]_F = (\W_F)^\top \bar\Theta_F \W_F + (\W_{SF})^\top \bar\Theta_S \W_{SF}\nonumber \\
&= V_F^\top \Lambda_F \bigg[ U_F^\top \bar\Theta_F U_F + U_{SF}^\top \bar\Theta_S U_{SF} \bigg] \Lambda_F V_F \nonumber \\
&= V_F^\top \Lambda_F \big(U^\top \bar\Theta U \big)_F \Lambda_F V_F.
\end{align}
Let $\tilde \Sigma= (V_F^\top)^{-1} \Sigma V_F^{-1}$. Then for $\rho >1/2$
\begin{equation}
 \tilde \Sigma
 =\int_0^\infty \left(e^{-\left (\frac{1}{2} I -\Lambda_F\right) u} \right)^\top \ \Lambda_F\ [U^\top \bar\Theta U]_F \ \Lambda_F \ e^{-\left (\frac{1}{2} I -\Lambda_F\right)u} \ du.
\end{equation}
For $i, j \in F$,
\begin{align*}
 [\tilde \Sigma]_{ij} =\lambda_i\lambda_j [u_i^\top \bar\Theta u_j] \int_0^\infty e^{-\left (1 - \lambda_i - \lambda_j \right)u} du =\frac{\lambda_i\lambda_j}
 {1-\lambda_i-\lambda_j} \left[u_i^\top \bar\Theta u_j\right].
\end{align*}
Hence $\Sigma = V_F^\top \tilde \Sigma V_F$ where $[\Sigma]_{ij} = \sum_{k\in F} \sum_{\ell \in F} \frac{\lambda_k \lambda_\ell}{1-\lambda_k-\lambda_\ell} (u_k^\top \bar\Theta u_\ell) v_{ki} v_{lj}$.

Now, for $\rho=1/2$, with $\lambda_{\max}(\W_F)=\lambda_1=1/2$ being simple, we have: 
\begin{equation}
 \tilde \Sigma
 = \lim_{t\to \infty} \frac{1}{\log(t)} \int_0^ {\log(t)} \left(e^{-\left (\frac{1}{2} I -\Lambda_F\right) u} \right)^\top \ \Lambda_F\ \left[U^\top \Theta U\right]_F \ \Lambda_F \ \left(e^{-\left (\frac{1}{2} I -\Lambda_F\right)u} \right)du.
\end{equation}
The (1,1) element is given by
\begin{align*}
 [\tilde \Sigma]_{11}
 & =\lambda_1\lambda_1 (u_1^\top \Theta u_1 ) \lim_{t\to \infty} \frac{1}{\log(t)} \int_0^ {\log(t)} e^{-\left (1 - \lambda_1 - \lambda_1 \right)u} du \\
 & = \frac{1}{4} (u_1^\top \Theta u_1 ) \lim_{t\to \infty} \frac{1}{\log(t)} \int_0^ {\log(t)} 1 du = \frac{1}{4} (u_1^\top \Theta u_1 ).
\end{align*}
For every other $k,l \in F$ we have $\lambda_k+\lambda_l <1$ and thus $\lim\limits_{t\to \infty} \frac{1}{\log(t)} \int_0^ {\log(t)} e^{-\left (1 - \lambda_k - \lambda_l \right)u}du =0$. 
Hence we get $[\Sigma]_{ij} = \frac{1}{4} (u_1^\top \Theta u_1 ) v_{1i}v_{1j}$. 
\end{enumerate}
\end{proof}

\begin{proof}[Proof of Corollary~\ref{Cor-1}]
With the assumption $\W=\W^\top$, we get $S=\emptyset$ and $U= V^\top$.
Thus for $\rho>1/2$, we have
\begin{align*}
 \tilde \Sigma
 & =\int_0^\infty \left(e^{-\left (\frac{1}{2} I -\Lambda \right) u} \right)^\top \ \Lambda\ \left[U^\top \bar\Theta U\right] \ \Lambda \ \left(e^{-\left (\frac{1}{2} I -\Lambda\right)u} \right)du \\
 & = c(\bq, \bZ^\star) \int_0^\infty \left(e^{-\left (\frac{1}{2} I -\Lambda \right) u} \right) \ \Lambda^2 \ \left(e^{-\left (\frac{1}{2} I -\Lambda\right)u} \right)du \\
 & = c(\bq, \bZ^\star) \Lambda^2 (I-2 \Lambda)^{-1}
\end{align*}
This implies, $\Sigma = c(\bq, \bZ^\star) V^\top \Lambda^2 (I-2 \Lambda)^{-1} V = c(\bq, \bZ^\star) \W^2 (I-2 \W)^{-1}$. Now for part (2), that is when $\rho=1/2$, we have
\[\tilde \Sigma
 = c(\bq, \bZ^\star) \Lambda^2 \lim_{t\to \infty} \frac{1}{\log(t)} \int_0^{\log(t)} e^{-\left (I -2 \Lambda \right) u} du 
 = c(\bq, \bZ^\star) \Lambda^2 \begin{pmatrix} 1 & \bzero\\ \bzero &\bzero
 \end{pmatrix}.\]
This implies, $\Sigma = c(\bq, \bZ^\star) \W^2 V^\top \begin{pmatrix}
 1 & \bzero\\
 \bzero &\bzero
 \end{pmatrix}V$. 
 Thus for $\rho>1/2$ we get $\Sigma = c(\bq, \bZ^\star) \W^2 (I-2 \W)^{-1}$ and for $\rho=1/2$ we get
\[\Sigma = c(\bq, \bZ^\star) \W^2 V^\top \begin{pmatrix}
1 & \bzero\\ \bzero &\bzero \end{pmatrix}V.\] 
Further, under \textbf{(SC1)} $\sum_{i=1}^N [\W]_{ij} 
= \frac{1}{\hm_j} \sum_{i\in N_j} \II_{i,i} (\alpha_i+ \beta_i-m_i) =\mu_F= \frac{1}{2}$ is the maximal eigenvalue of $\W$ and the corresponding normalized eigenvector is $\frac{1}{\sqrt{N}} \bone$. Hence we get 
\begin{align*}
\Sigma 
& = c(\bq, \bZ^\star) \W^2 V^\top 
\begin{pmatrix}
1 & \bzero\\
\bzero &\bzero
\end{pmatrix}V = c(\bq, \bZ^\star) \W^2 \frac{1}{N} J = \frac{c(\bq, \bZ^\star)}{4N} J.
\end{align*}
This completes the proof.
\end{proof}

\begin{remark}
When all nodes are preferential, under the condition \textbf{(SSC1)}, $\bZ^\star=\frac{m^F-\beta^F}{2m^F-\alpha^F-\beta^F}$ (note that under the conditions of Corollary~\ref{Cor-1}, $S=\emptyset$). Thus for $\rho>1/2$ we get $\Sigma = \frac{(m^F-\beta^F)(m^F-\alpha^F)}{(2m^F-\alpha^F-\beta^F)^2} \W^2 (I-2 \W)^{-1}$ and for $\rho=1/2$ we get
\[\Sigma = \frac{(m^F-\beta^F)(m^F-\alpha^F)}{(2m^F-\alpha^F-\beta^F)^2} \W^2 V^\top \begin{pmatrix}
1 & \bzero\\ \bzero &\bzero \end{pmatrix}V.\] 
Under \textbf{(SSC1)} with $S=\emptyset$, $\mu_F= \frac{\alpha^F +\beta^F -m^F}{m^F}$. Thus $\frac{\alpha^F +\beta^F -m^F}{m^F} = \frac{1}{2}$ is the maximal eigenvalue of $\W$ with the corresponding normalized eigenvector $\frac{1}{\sqrt{N}} \bone$. Hence, we get $\Sigma = \frac{(m^F-\beta^F)(m^F-\alpha^F)}{N (m^F)^2} J$. Similarly, when all nodes are de-preferential we get $\Sigma = \frac{\alpha^F \beta^F}{N(\alpha^F + \beta^F)^2}J = \frac{4\alpha^F \beta^F}{9N(m^F)^2}J$.
\end{remark}

\section{Simulations and Discussion} \label{sec:sim}
Since $\bZ_F^t$ converges to a deterministic limit under the conditions of Theorem~\ref{thm:convergence}, the variance $\Var(\bZ_F^t)$ converges to zero as $t \to \infty$. Before we illustrate some examples via simulation, we obtain the approximate rate at which $\Var(\bZ_F^t)$ converges to zero and illustrate the explicit dependence of the rate of decay on the eigenvalue structure of the matrix $\W_F$. 

For $N \times N$ matrices $Q_1$ and $Q_2$, we write $Q_1 \preccurlyeq Q_2$ if $[Q_1]_{ij} = \OO ([Q_2]_{ij})$ for all $1 \leq i, j \leq N$. Further, $Q_1 \preccurlyeq f(t)$ means $[Q_1]_{ij} = \OO (f(t))$ for all $1 \leq i, j \leq N$. Suppose $q_i \neq 1/2$ for all $i$. From \eqref{Rec:vector0} and \eqref{hfun} recall that
\[\bZ^{t+1}_F =\bZ^t_F + \left[h(\bZ^t_F) + (\Delta \chi^{t+1} B \tA )_F \right] \hM_F \left(\bT^{t+1}_F\right)^{-1}, \]
where $h(\bZ_F) = - \bZ_F \left[I - \W_F \right] + \bZ_S^0 \W_{SF} +(\ba \tA)_F - (\bq B\tA)_F.$
Therefore,
\begin{equation} \label{varexp} 
\Var \left(\IE[\bZ^{t+1}_F | \FF_t] \right) = \Var \left(\bZ^{t}_F + h(Z_F^t) \right) = P_t^\top \Var (\bZ_F^t )P_t,
\end{equation}
where $P_t = I - \left(I - \W_F \right) \hM_F \left(\bT^{t+1}_F\right)^{-1}$. 
Similarly using \eqref{eq:var:chi} we get
\begin{align}
 \IE[\Var(\bZ_F^{t+1} | \FF_t)] 
 &= \hM_F (\bT^{t+1}_F)^{-1} \big( (B\tA)^\top  (-\Theta \II^2 +\frac{1}{4} I) B\tA \big)_F \hM_F (\bT^{t+1}_F)^{-1} \nonumber\\
 &= \hM_F (\bT^{t+1}_F)^{-1} \big(- \W^\top \bar \Theta^t \W \big)_F \hM_F (\bT^{t+1}_F)^{-1} \nonumber \\
 &= \hM_F (\bT^{t+1}_F)^{-1} (\W_F )^\top \bar{\Theta}^t_F \W_F \hM_F (\bT^{t+1}_F)^{-1} = Q_t^\top \bar\Theta^t_F Q_t, \label{expvar} 
\end{align}
where $\bar{\Theta}^t = -\Theta^t + \frac{1}{4}\II^{-2}$ and $Q_t = \W_F \hM_F (\bT^{t+1}_F)^{-1}$. Now, combining \eqref{varexp} and \eqref{expvar} we get
\[\Var(\bZ_F^{t+1}) = P_t^\top \Var (\bZ_F^t )P_t + Q_t^\top \bar\Theta^t_F Q_t.\]
Iterating this we get
\begin{align*} 
 \Var(\bZ_F^{t+1}) = \sum_{j=0}^t \bigg(\prod_{k=0}^{t-j-1} P_{t-k}^\top \bigg) (Q_j)^\top \, \bar\Theta^j_F Q_j\, \bigg(\prod_{k=j+1}^t P_k \bigg).
 \end{align*}
Since $\hM_F \left(\bT^{j+1}_F\right)^{-1} \preccurlyeq \frac{1}{j} I_F$ we get $Q_j \preccurlyeq \frac{1}{j} I_F$ and thus 
\begin{equation}\label{eq:var_order1}
 \Var(\bZ_F^{t+1}) \preccurlyeq \sum_{j=0}^t \frac{1}{j^2} \left(\prod_{k=0}^{t-j-1} P_{t-k}^\top \right) \bar\Theta^j_F \left(\prod_{k=j+1}^t P_k \right).
\end{equation}
Now assuming $\W$ is diagonalisable i.e. $\W= U \Lambda U^{-1}$ we get 
\begin{equation}\label{eq:P_order1}
 \prod_{k=j+1}^t P_k \preccurlyeq U \bigg[\prod_{k=j+1}^t \left(I + \frac{1}{j} (\Lambda_F - I) \right) \bigg] U^{-1} \preccurlyeq \left(\frac{t}{j}\right)^{\Re(\lambda_{max})-1}.
 \end{equation}
Thus we have the following rates of decay of variance.
\begin{proposition}
Suppose $q_i \neq 1/2$ for all $i$. The following bounds hold for $\Var(\bZ_F^t)$ 
\begin{align} 
 \Var(\bZ_F^{t+1})
 \preccurlyeq \begin{cases} 
 t^{2\Re(\lambda_{max})-2} & \text{ for }\ \Re(\lambda_{max}) > 1/2 \\ 
 t^{-1} \log t & \text{ for }\ \Re(\lambda_{max})=1/2 \\
 1/t & \text{ for } \ \Re(\lambda_{max}) <1/2
 \end{cases}. \label{eq:var_order}
 \end{align}
 \end{proposition}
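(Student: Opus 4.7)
The plan is to combine the two bounds already derived in the excerpt, namely \eqref{eq:var_order1} and \eqref{eq:P_order1}, and then reduce the resulting sum to a standard $p$-series estimate. Since $\Omega_j$ is diagonal with entries of the form $\IE[Z_i^j(1-Z_i^j)] \leq 1/4$, it is bounded entrywise by a constant matrix, and plays no role beyond an absolute constant. So the only thing that really matters is how the product of $P_k$'s grows relative to $1/j^2$.

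More concretely, I would write $\rho_\star \coloneqq \Re(\lambda_{\max}(\W_F))$ and substitute \eqref{eq:P_order1} into \eqref{eq:var_order1}, using the analogous bound $\prod_{k=0}^{t-j-1} P_{t-k}^\top \preccurlyeq (t/j)^{\rho_\star - 1}$ that follows by transposing (the eigenvalues of $P_k^\top$ are complex conjugates of those of $P_k$, so the real parts are the same). This yields
\[
 \Var(\bZ_F^{t+1}) \;\preccurlyeq\; \sum_{j=1}^{t} \frac{1}{j^2} \left(\frac{t}{j}\right)^{2(\rho_\star-1)} \;=\; t^{2\rho_\star - 2} \sum_{j=1}^{t} j^{-2\rho_\star}.
\]
(The $j=0$ term is either absent or absorbed into constants.)

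The three claimed bounds now follow from the standard asymptotics of $\sum_{j=1}^t j^{-2\rho_\star}$. When $\rho_\star > 1/2$, the series converges to a finite constant, giving $\Var(\bZ_F^{t+1}) \preccurlyeq t^{2\rho_\star - 2}$. When $\rho_\star = 1/2$, the sum is the harmonic sum and contributes a factor of $\log t$, yielding $t^{-1} \log t$. When $\rho_\star < 1/2$, integral comparison gives $\sum_{j=1}^t j^{-2\rho_\star} \asymp t^{1 - 2\rho_\star}$, and multiplying by the prefactor $t^{2\rho_\star - 2}$ produces $t^{-1}$. Splitting into these three cases and assembling finishes the proof.

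I do not expect any serious obstacle: the two nontrivial estimates \eqref{eq:var_order1} and \eqref{eq:P_order1} have already been done. The only mild care needed is to justify that $\Omega_j$ does not introduce hidden $t$-dependence (it does not, since its entries are uniformly bounded in $[0,1/4]$), and to track the exponent carefully when combining the two products $\prod P_{t-k}^\top$ and $\prod P_k$ — each contributes a factor of $(t/j)^{\rho_\star - 1}$, so together they give $(t/j)^{2(\rho_\star - 1)}$, which is what drives the trichotomy at the threshold $\rho_\star = 1/2$.
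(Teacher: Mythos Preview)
Your proposal is correct and follows essentially the same route as the paper: substitute \eqref{eq:P_order1} into \eqref{eq:var_order1} to obtain $\Var(\bZ_F^{t+1}) \preccurlyeq \sum_{j=1}^t j^{-2}(t/j)^{2\Re(\lambda_{\max})-2}$, and then split into the three regimes via the standard $p$-series asymptotics. If anything, your write-up is more explicit than the paper's (which simply records the displayed sum and notes the convergence of $\sum j^{-2\Re(\lambda_{\max})}$ when $\Re(\lambda_{\max})>1/2$), since you spell out why $\Omega_j$ contributes only a constant and why both matrix products carry the same exponent.
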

 \begin{proof}
Using \eqref{eq:P_order1} in \eqref{eq:var_order1}, we get
\[ \Var(\bZ_F^{t+1}) \preccurlyeq \sum_{j=1}^t \frac{1}{j^2} \left(\frac{t}{j}\right)^{2\Re(\lambda_{max})-2}, \]
 which simplifies to \eqref{eq:var_order} where the decay rate in the regime $\Re(\lambda_{max}) > 1/2$ holds because $\sum_{j=1}^t \frac{1}{j^{2 \Re(\lambda_{max})}} < \infty$ as $t \to \infty$. 
 \end{proof}

We now discuss three examples in the next section with different sampling and reinforcement schemes and present the simulation results.


\subsection{Simulation results}\label{sec:simulations-results}
In this section, we present the simulation results for a cycle graph with 4 nodes, where all nodes are of P\'olya type and $q_i\in \{0,1\}$ for all $i$. We explore three specific cases for this graph below.
\begin{enumerate}
\item Consider the case when all nodes are preferential except node 4 (see Figure~\ref{fig:4cycle-example2}), that is $\II = \Diag(1,1,1,-1)$. We observe that this case satisfies condition (iii) of the Theorem~\ref{thm:convergence}, as it does not have a valid graph partition. Thus by Theorem~\ref{thm:convergence}, $\bZ^t$ has a deterministic limit $1/2 \bone$, which is independent of the initial vector $\bZ^0$. Figure~\ref{fig:convergence2} illustrates the convergence of $Z^t_1, \dots, Z_4^t$. 
\begin{figure}[H]
\centering
		\begin{tikzpicture}[main/.style = {draw, circle}]
				\node[main, text=black, fill= white](1) at (2,1) {1}; 
				\node[main, text=black, fill= white] (3) at (4,1) {3};
				\node[main, text=black, fill= white] (2) at (3,0) {2};
				\node[main, text=red, fill= white] (4) at (1,0) {4}; 
				\draw[->] (1)--(3);
				\draw[->] (3)--(2);
				\draw[->] (2)--(4);
				\draw[->] (4)--(1);
			\end{tikzpicture}
\caption{A graph with 4 nodes with $\PP =\{1,2,3\}, \DD = \{4\}$.} \label{fig:4cycle-example2}
\end{figure}
Note that, in this case, the eigenvalues of the matrix $I-\W$ are $1+ \frac{1}{\sqrt{2}} + \frac{i}{\sqrt{2}}, 1+ \frac{1}{\sqrt{2}} - \frac{i}{\sqrt{2}}, 1- \frac{1}{\sqrt{2}} + \frac{i}{\sqrt{2}}, 1- \frac{1}{\sqrt{2}} - \frac{i}{\sqrt{2}}$. Therefore $\rho = 1-\frac{1}{\sqrt{2}} <1/2$ and $\Re(\lambda_{max}) = \frac{1}{\sqrt{2}}$ and thus from \eqref{eq:var_order} we get $\Var(\bZ^t) \preccurlyeq t^{\sqrt{2} -2}$. 
\begin{figure}[H]
\centering 
 \includegraphics[width=\textwidth]{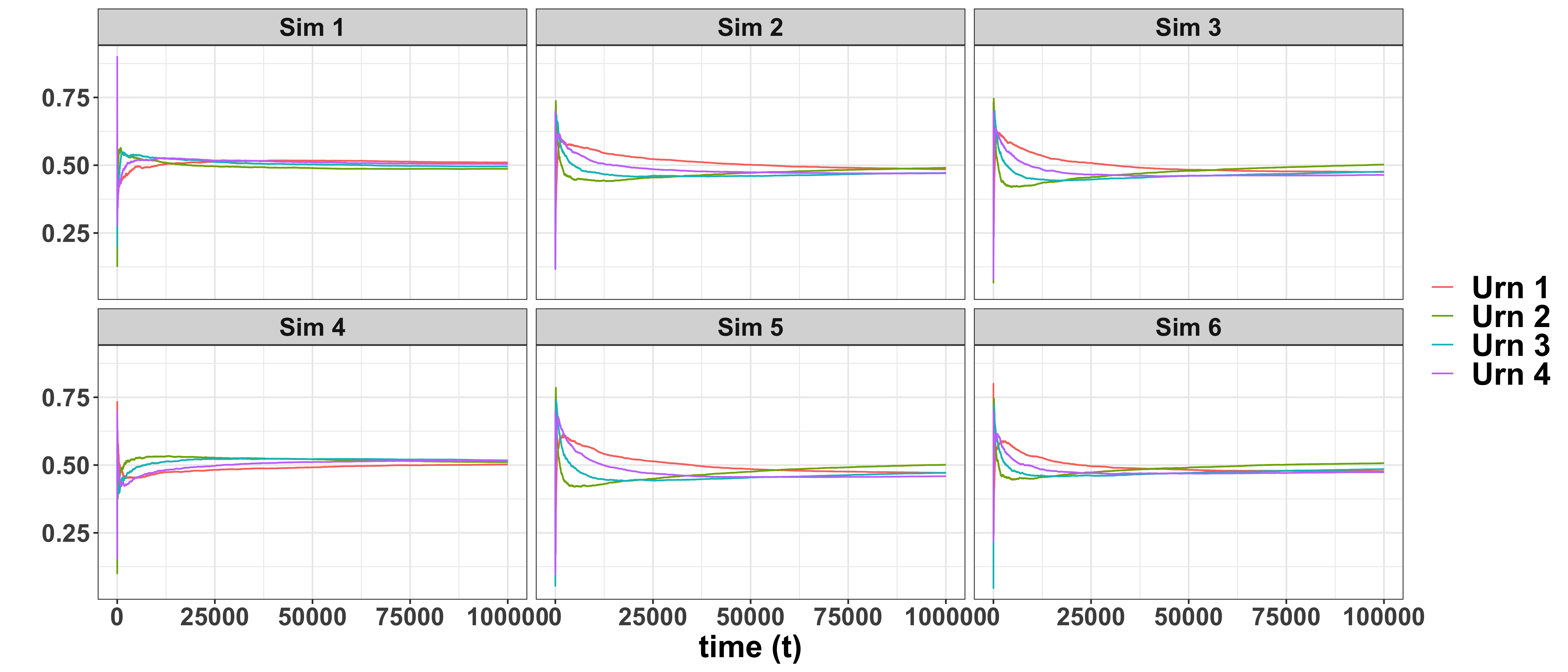}
 \caption{Convergence of $Z^t_1, \dots, Z_4^t$ in 6 different simulations. In this case, the limit is deterministic 0.5 for all urns.}
 \label{fig:convergence2}
\end{figure}

\item We now consider two examples of cycle graphs with $4$ vertices where Theorem~\ref{thm:convergence} does not apply. The first graph has all preferential nodes, i.e. $\II = \Diag(1,1,1,1)$ (see Figure~\ref{fig:example2} (a)). The second graph has alternate preferential and de-preferential nodes, i.e. $\II = \Diag(1,1,-1,-1)$ (see Figure~\ref{fig:example2} (b)). 
Since a valid graph partition exists according to Algorithm~\ref{alg:graph_partition} (see Appendix~\ref{sec:exploration-algorithm}) for both the cases, condition (iii) of Theorem~\ref{thm:convergence} is not satisfied. Therefore, the urn configuration on these graphs does not converge to a deterministic limit.
\begin{figure}[!ht]
\centering
\begin{subfigure}{.5\textwidth}
 \centering
\begin{tikzpicture}[main/.style = {draw, circle}]
    \node[main, text=black, fill= white](1) at (2,1) {1}; 
    \node[main, text=black, fill= white] (3) at (4,1) {3};
    \node[main, text=black, fill= white] (2) at (3,0) {2};
    \node[main, text=black, fill= white] (4) at (1,0) {4}; 
    \draw[->] (1)--(3);
    \draw[->] (3)--(2);
    \draw[->] (2)--(4);
    \draw[->] (4)--(1);
    \end{tikzpicture}
 \caption{$\PP =\{1,2,3,4\}, \DD =\emptyset$.} \label{fig:4cycle-example1}
 \label{fig:sub1}
\end{subfigure}%
\begin{subfigure}{.5\textwidth}
 \centering
	\begin{tikzpicture}[main/.style = {draw, circle}]
	\node[main, text=white, fill= -red!75](1) at (2,1) {1}; 
    \node[main, text=white, fill= red!75] (3) at (4,1) {3};
    \node[main, text=white, fill= -red!75!green] (2) at (3,0) {2};
    \node[main, text=white, fill= red!75!green] (4) at (1,0) {4}; 
    \draw[->] (1)--(3);
    \draw[->] (3)--(2);
    \draw[->] (2)--(4);
    \draw[->] (4)--(1);
	\end{tikzpicture}		
\caption{$\PP =\{1,2\}, \DD = \{3, 4\}$.} \label{fig:4cycle-example3} \label{fig:sub2}
\end{subfigure}
\caption{Graphs that do not satisfy the conditions of Theorem~\ref{thm:convergence}}
\label{fig:example2}
\end{figure}

The first case corresponds to a specific instance of P\'olya type reinforcement at each node in a $d$-regular graph (where $\diin = \diout = d, \forall\, i)$ for $d=2$, which was earlier studied in \cite{kaur2023interacting}. In this work, authors showed that synchronisation occurs, that is there exists a random variable $Z^\infty$ such that $\bZ_F^\star = Z^\infty \bone$ (as illustrated through simulations in Figure~\ref{fig:convergence1}). 
\begin{figure}[H]
\centering 
 \includegraphics[width=\textwidth]{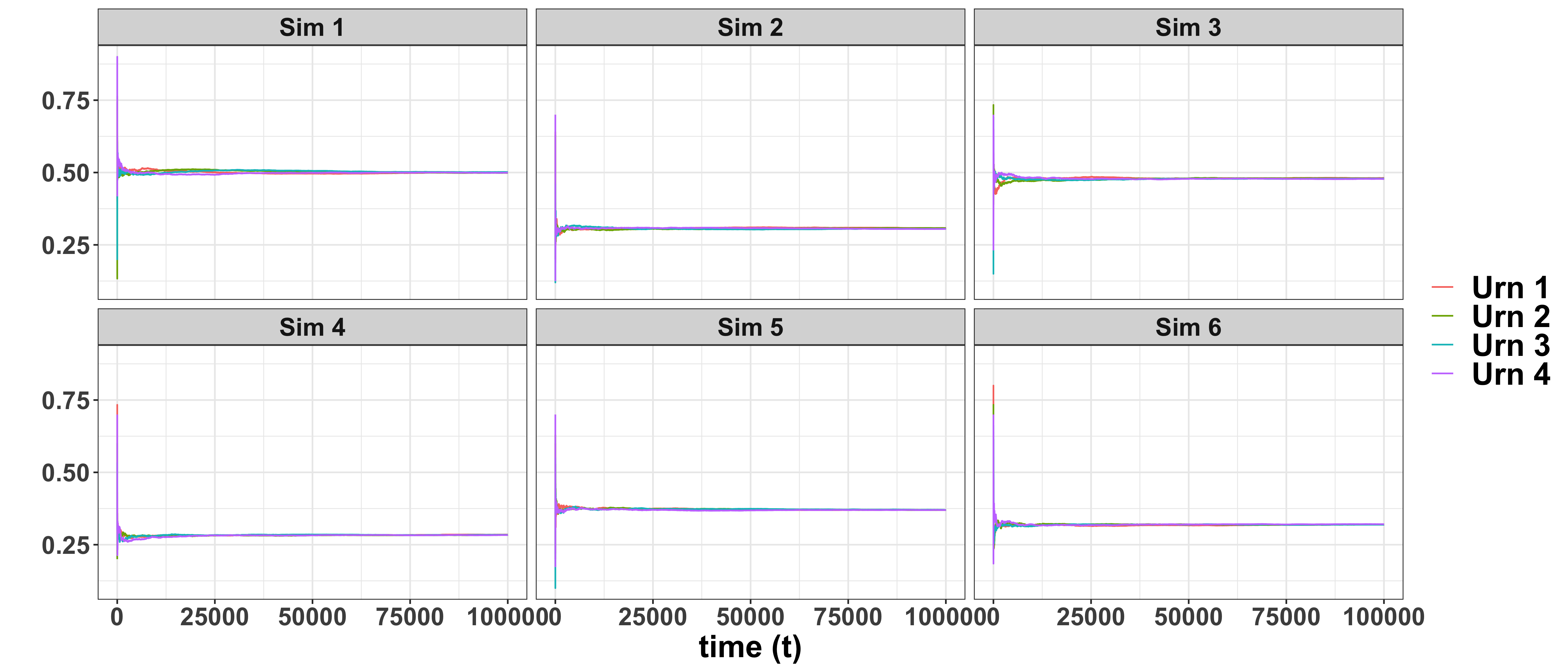}
 \caption{Convergence of $Z^t_1, \dots, Z_4^t$ in 6 different simulations.}
 \label{fig:convergence1}
\end{figure}

The simulations in Figure~\ref{fig:convergence3} suggest that in the second case, the limit is of the form $\left(Z^\infty, 1-Z^\infty, 1-Z^\infty, Z^\infty \right)$. This is consistent with Remark~\ref{rem: nonP_othercases}.  
 
\begin{figure}[H]
\centering 
 \includegraphics[width=\textwidth]{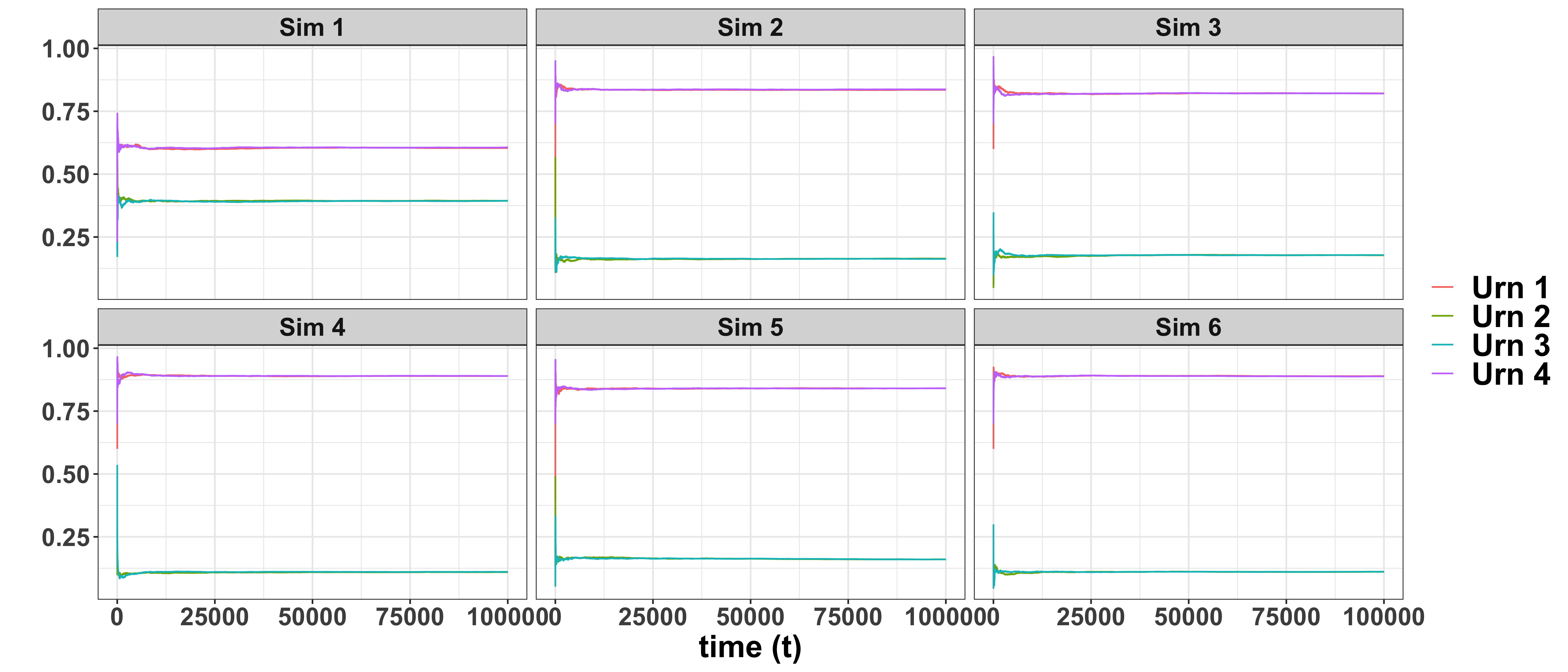}
 \caption{Convergence of $Z^t_1, \dots, Z_4^t$ in 6 different simulations.}
 \label{fig:convergence3}
\end{figure}

\begin{figure}[H]
\centering 
 \includegraphics[width=\textwidth]{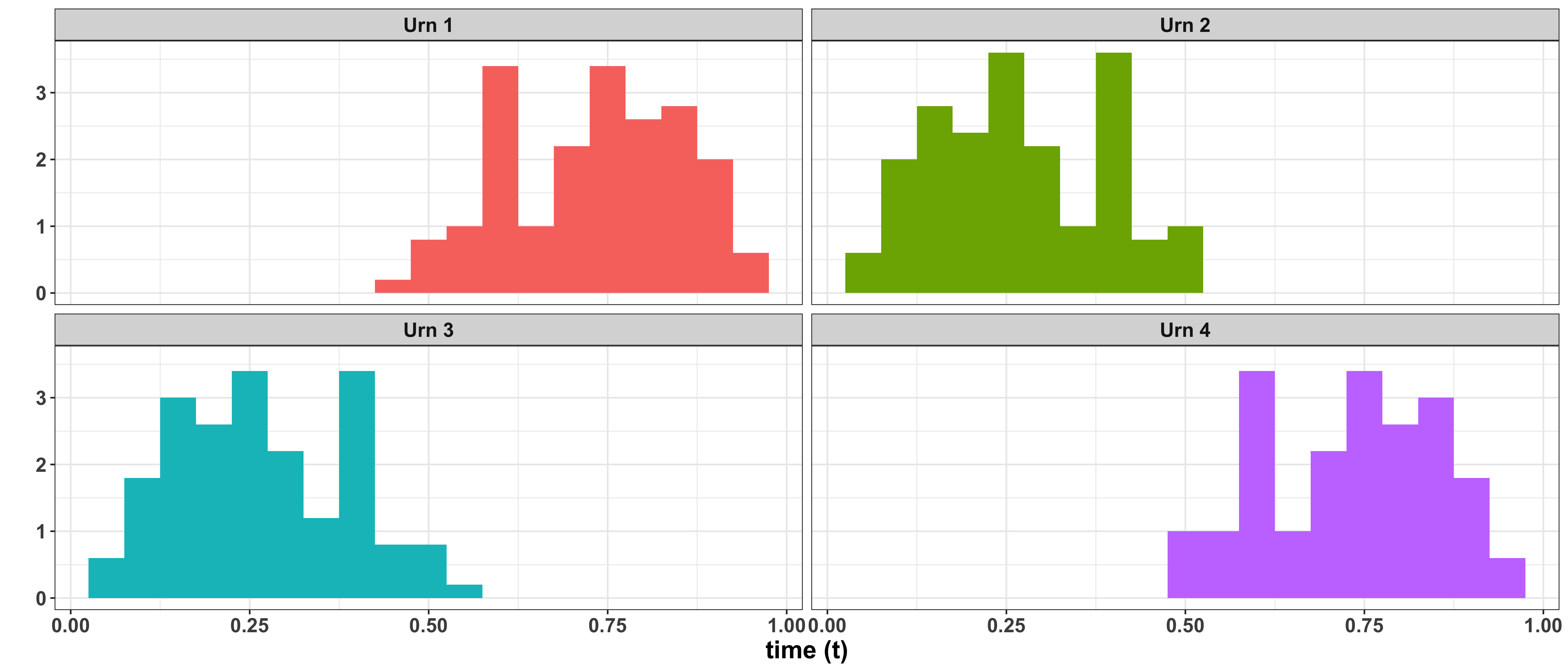}
 \caption{Histogram of $Z^t_i$ in 100 different simulations, at$t=100000$, for the 4 interacting urns placed on the nodes of the graph as in Figure~\ref{fig:4cycle-example3}.}
 \label{fig:clt3}
\end{figure}
\end{enumerate}

For a graph that can be partitioned using Algorithm~\ref{alg:graph_partition} (Appendix~\ref{sec:exploration-algorithm}), the fraction of balls of either colour in each urn tends to a random limit. Specifically, from our simulations, we conjecture that in a cycle graph with alternating preferential and de-preferential nodes, the limiting behavior results in the fractions of balls of either colour in $P_1, D_2$ (or $P_2, D_1$) converging to the same limit. Further analysis of these cases, with a more general sampling scheme, is left as future work. 

\subsection{Application to opinion dynamics}\label{sec:opinion-dynamics}
Our model is motivated by the network-based opinion dynamics model discussed in \cite{kaur2023interacting}. This model uses urns to represent opinions in a network, with white and black balls indicating positive and negative views, respectively. An individual's opinion $O_i^t$ can be represented either as a fraction $Z_i^t$, which is supported on $[0, 1]$ or as a sign $\mathrm{Sign}(Z_i^t - 1/2) \in \{-1, 0, 1\}$. In this model, stubborn nodes are treated as bots, with $Z^0_i$ being the bot's power to influence towards the \lq\lq positive/favorable" opinion.

At each time step, every individual reveals their true opinion with probability $q_i$ and reinforces their opinion based on the type of reinforcement applied: P\'olya type reinforcement reinforces only the revealed opinion, whereas non-P\'olya type reinforcement adds a mix of both types of views. Our main results show that on a strongly connected network if there is at least one individual with $q_i \in (0, 1)$, all individual's opinions converge to a deterministic limit. In the case when all $q_i \in \{0, 1\}$, the existence of a deterministic limiting opinion depends on the reinforcement type as well as the graph structure. We also obtain conditions for asymptotic consensus. 

We briefly discuss the implications of our results for the opinion dynamics model. Consider a cycle graph on $4$ nodes with edges $i \to i+1$ for $1 \leq i \leq 3$ and $4 \to 1$. Note that for directed cycles, $\tA=A$ and therefore $m_i$'s do not contribute to the limiting opinion. Let $x_i = (2q_i-1)r^\prime_i$, where $r^\prime_i=(a_i + b_i-1)$. The limiting opinion of node 1 is given by
\[Z_1^\star = \frac{1}{1-x_1 x_2 x_3 x_4} [a_4-q_4r^\prime_4+(a_1-q_1r^\prime_1)x_2x_3x_4 + (a_2-q_2r^\prime_2) x_3x_4+(a_3-q_3 r^\prime_3)x_4 ]. \]
Suppose for $i \in [N]$, $a_i=a$ and $b_i=b$. Then, $r_i^\prime = r^\prime$ (say) for all $1 \leq i \leq 4$. Further assume $q_1 = 1/2$. We consider two cases
\begin{itemize}
\item[Case I:] When all the other nodes are preferential, that is $2, 3, 4 \in \PP$, we get
\begin{eqnarray*}
Z_1^\star (I) &=& 1-b+1/2(1+a-b)(r^\prime)^3+ (1-b) (r^\prime)^2+(1-b)r^\prime \\
&=&\frac{a (r^\prime)^3}{2} + (1-b) (1+r^\prime+(r^\prime)^2 + (r^\prime)^3/2).
\end{eqnarray*}
\item[Case II:] $2, 3 \in \PP$ and $4 \in \DD$, we get
\[Z_1^\star (II)= a - a(r^\prime)^3/2 - (1-b) (r^\prime+(r^\prime)^2 + (r^\prime)^3/2).\]
\end{itemize}
Here $Z_1^\star(I)$ and $Z_1^\star(II)$ denote the limiting configuration of urn 1 in the two cases. 
Note that $Z_1^\star(II) = Z_1^\star(I)$ when $r^\prime=0$ and $Z_1^\star(II) < Z_1^\star(I)$ when $r^\prime > 0$. Now consider a bot (or a stubborn vertex $s$) attached to the node $1$, with $2, 3 \in \PP$ and $4 \in \DD$ (as shown in Figure~\ref{fig:example-opinion-synamics}). 

\begin{figure}[H]
\centering
\begin{tikzpicture}[main/.style = {draw, circle}]
 \node[main, text=black, fill= white](S) at (0,1) {s}; 
    \node[main, text=black, fill= white](1) at (2,1) {1}; 
    \node[main, text=black, fill= white] (2) at (4,1) {2};
    \node[main, text=black, fill= white] (3) at (3,0) {3};
    \node[main, text=black, fill= white] (4) at (1,0) {4}; 
    \draw[->] (S)--(1);
    \draw[->] (1)--(2);
    \draw[->] (2)--(3);
    \draw[->] (3)--(4);
    \draw[->] (4)--(1);
\end{tikzpicture}
\caption{A cycle graph with a stubborn node $s$ attached.} \label{fig:example-opinion-synamics}
\end{figure}

In this case, the fraction of balls of white colour in urn $1$ converges to $Z_1^\star(s)=Z_1(II) + f(Z_s^0, r, \bm)$, where $f(Z_s^0, r, \bm)>0$ for $r>0$. Thus, a bot can be used to mitigate the effect of the de-preferential node attached to $1$. 
Further, our results provide explicit expressions that can determine the optimal \lq\lq strength" (given by $Z_s^0$ and the reinforcement matrix) of the bot(s) required to obtain a specific limiting opinion profile. We remark that for a more complicated graph, the optimal positions of the bots (with varying strengths) on the network is an interesting problem in this context. 

\bibliographystyle{APT}
\bibliography{ref}

\newpage

\appendix

\section{Graph exploration Process}\label{sec:exploration-algorithm}

 \begin{algorithm}[H]
 \caption{Graph Exploration Process} \label{alg:graph_partition}
 \textbf{Input}: A directed graph $\GG(V, E)$ and the sets of preferential nodes $\PP$ and de-preferential nodes $\DD$. \\
 \textbf{Output}: Whether $\GG$ admits a partition or not. 
 \begin{algorithmic}[1] 
 \STATE Select a node $j \in V$ 
\IF {$j\in \PP$}
\STATE Initialize $P_1 \gets \{j\}$, $P_2 = D_1 = D_2 = \emptyset$.
\ELSE 
\STATE Initialize $D_1 \gets \{j\}$, $P _1 = P_2 = D_2 = \emptyset$.
\ENDIF
 \WHILE{$(P_1\cup P_2\cup D_1\cup D_2) \subsetneq V $}
 \STATE $P _1 \gets P_1 \cup \left(\cup_{j\in P_1} N_j \cap \PP \right)$ and $D_1 \gets D_1 \cup \left(\cup_{j\in P_1} N_j \cap \DD \right)$ 
 \STATE $P _2 \gets P_2 \cup \left(\cup_{j\in D_1} N_j \cap \PP \right)$ and $D_2 \gets D_2 \cup \left(\cup_{j\in D_1} N_j \cap \DD \right)$ 
 \STATE $P _1 \gets P_1 \cup \left(\cup_{j\in D_2} N_j \cap \PP \right)$ and $D_1 \gets D_1 \cup \left(\cup_{j\in D_2} N_j \cap \DD \right)$ 
 \STATE $P _2 \gets P_2 \cup \left(\cup_{j\in P_2} N_j \cap \PP \right)$ and $D_2 \gets D_2 \cup \left(\cup_{j\in P_2} N_j \cap \DD \right)$ 
 \IF {$P_1, P_2, D_1, D_2$ are not mutually disjoint} 
 \STATE {\bf BREAK} and \textbf{return} \lq\lq $\GG$ does not admit a Graph partition." 
 \ENDIF
 \ENDWHILE 
 \STATE Repeat Steps 8 to Step 11 once. 
\IF {any node is re-assigned from $P _1$ to $P _2$ (or vice versa) or from $D_1$ to $D_2$ (or vice versa)} 
 \ELSE
 \STATE \textbf{return} \lq\lq $\GG$ admits a Graph partition $\GG(P_1, P_2, D_1, D_2)$, such that $\PP = P_1\cup P_2$ and $\DD = D_1\cup D_2$ "
 \ENDIF
 \end{algorithmic}
 \end{algorithm}
If a graph partition exists, it is determined; otherwise, the algorithm reports that no such partition is possible. Note that, this partitioning algorithm is invariant to the initial choice of node $j$, up to a permutation of sets $(P_1, P_2, D_1, D_2)$. We now provide a few examples to illustrate different cases.
\begin{example}[Graph that does not admit a partition] \label{single_depref}
Suppose $F=\PP \cup \DD$ is such that it is strongly connected and there is only one node in the set $\DD$, represented as $\DD=\{\mathfrak{d}\}$. Let $j \in \PP$ be the node selected at Step 1 of Algorithm~\ref{alg:graph_partition} (Appendix~\ref{sec:exploration-algorithm}), that is, $j \in P_1$. Since $F$ is strongly connected, there exists a path $\mathfrak{d} \rightsquigarrow j$ such that all nodes on the path are preferential, implying $\mathfrak{d}$ must be in set $D_1$ (see Step 8 of Algorithm~\ref{alg:graph_partition} (Appendix~\ref{sec:exploration-algorithm})). Similarly, $j \rightsquigarrow \mathfrak{d}$ via a path of preferential nodes, implying that $j \in P_2$ (see Step 9 of Algorithm~\ref{alg:graph_partition} (Appendix~\ref{sec:exploration-algorithm}) or see Figure~\ref{fig:reduced_graph}). A similar conclusion holds if the node selected at step 1 is $\mathfrak{d}$. Thus, such a graph does not admit a valid partition. To illustrate this, we consider a special case of a strongly connected graph with one de-preferential node in Figure~\ref{fig:single_depref}. 
\begin{figure}[H]
\centering
		\begin{tikzpicture}[main/.style = {draw, circle}]
				\node[main, text=white, fill= -red!75](1) at (2,2) {1}; 
				\node[main, text=white, fill= -red!75!green] (2) at (3,2) {2}; 
				\node[main, text=white, fill= -red!75!green] (3) at (4,2) {3};
				\node[main, text=white, fill = -red!75!green] (4) at (4,1) {4}; 
				\node[main, text=white, fill= -red!75!green] (5) at (4,0) {5};
				\node[main, text=white, fill= -red!75!green] (6) at (3,0) {6};
				\node[main, text=white, fill= -red!75!green] (7) at (2,0) {7}; 
				\node[main, text=white, fill = red!75!green] (8) at (2,1) {8}; 
				\draw[->] (1)--(2);
				\draw[->] (2)--(3);
				\draw[->] (3)--(4);
				\draw[->] (4)--(5);
				\draw[->] (5)--(6);
				\draw[->] (6)--(7);
				\draw[->] (7)--(8);
				\draw[->] (8)--(1);
			\end{tikzpicture}
\caption{A graph with 8 nodes with $\PP =\{1, 2, 3, 4, 5, 6, 7 \}$ and $\DD = \{8\}$. Suppose in the Step 3 of Algorithm~\ref{alg:graph_partition} (Appendix~\ref{sec:exploration-algorithm}) we initialize with $P _1=\{1\}, P_2 =D_1=D_2=\emptyset$. Then following the algorithm Steps 8 to Step 11, we get $D_1 =\{8\}$ and $P _2=\{2, 3, 4, 5, 6, 7 \}, D_2 =\emptyset$. However, in Step 16, node 1 gets reassigned to $P _2$. Therefore, the graph does not admit a graph partition under Algorithm~\ref{alg:graph_partition} (Appendix~\ref{sec:exploration-algorithm}).} \label{fig:single_depref}
\end{figure}
\end{example}

\begin{example}[Graph that admits a partition] \label{cycle}
Consider an even cycle of size $2k$ with alternate preferential and de-preferential nodes. In this case, starting with $1 \in P_1$, the algorithm terminates with a valid assignment of nodes to the four sets, namely, $P _1 = \{1, 3, \dots, k-1 \}, P_2 = \{2, 4, \dots, k \}, D_1 = \{k+1, k+3, \dots, 2k-1 \}$ and $D_2 = \{k+2, k+4, \dots, 2k \}$. Figure~\ref{fig:evencycle} illustrates the case for $k=4$.

\begin{figure}[h] 
\centering
		\begin{tikzpicture}[main/.style = {draw, circle}]
				\node[main, text=white, fill= -red!75](1) at (2,2) {1}; 
				\node[main, text=white, fill= red!75 ] (2) at (3,2) {5}; 
				\node[main, text=white, fill= -red!75!green] (3) at (4,2) {2};
				\node[main, text=white, fill = red!75!green] (4) at (4,1) {6}; 
				\node[main, text=white, fill= -red!75] (5) at (4,0) {3};
				\node[main, text=white, fill= red!75 ] (6) at (3,0) {7};
				\node[main, text=white, fill= -red!75!green] (7) at (2,0) {4}; 
				\node[main, text=white, fill = red!75!green] (8) at (2,1) {8}; 
				\node[main, text=white, align=center, fill= red!75!green ] (D1) at (9,2) {$D_1 $\\ $\{6,8\}$}; 
				\node[main, text=white, align=center, fill= red!75] (D2) at (12,2) {$D_2 $\\ $\{5,7\}$} ;
				\node[main, text=white, align=center, fill= -red!75 ] (P1) at (9,0) {$P_1 $\\ $\{1,3\}$}; 
				\node[main, text=white, align=center, fill = -red!75!green] (P2) at (12,0) {$P_2 $\\ $\{2,4\}$}; 
				\draw[->] (1)--(2);
				\draw[->] (2)--(3);
				\draw[->] (3)--(4);
				\draw[->] (4)--(5);
				\draw[->] (5)--(6);
				\draw[->] (6)--(7);
				\draw[->] (7)--(8);
				\draw[->] (8)--(1);
				\draw[->, style = thick] (D1) --(P1);
				\draw[->, style = thick] (D2) --(P2);
				\draw[->, style = thick] (P2) --(D1);
				\draw[->, style = thick] (P1) --(D2);
			\end{tikzpicture}
\caption{A graph with 8 nodes with $\PP =\{1,2,3,4\}$ and $\DD = \{5,6,7,8\}$ that results in a valid partition via the given exploration process. In particular, we get $P _1=\{1,3\}$, and $P _2=\{2,4\}$, $D_1= \{6,8\}$, $D_2=\{5,7\}$.} \label{fig:evencycle}
\end{figure}
It is easy to see that a cycle graph with an odd number of de-preferential nodes does not admit a valid partition whereas, a cycle graph with an even number of de-preferential nodes has a valid partition.
\end{example}

\end{document}